\newcommand{\dt}{{\rm d} t}
\newcommand{\ds}{{\rm d} s}
\newcommand{\dx}{{\rm d} x}
\newcommand{\rd}{{\rm d} }
\newcommand{\RR}{{\mathbb R}}
\newcommand{\EE}{{\mathbb E}}
\newcommand{\mE}{{\mathcal E}}
\newcommand{\hY}{{\widehat{Y}}}
\newcommand{\mc}[1]{\mathcal{#1}}
\newcommand{\la}{\langle}
\newcommand{\ra}{\rangle}
\DeclareMathOperator*{\argmin}{argmin}
\newtheorem{theorem}{Theorem}[section]
\newtheorem{lemma}[theorem]{Lemma}
\newtheorem{assum}[theorem]{Assumption}
\newtheorem{corollary}[theorem]{Corollary}
\newtheorem{proposition}[theorem]{Proposition}
\newtheorem{remark}[theorem]{Remark}
\definecolor{ForestGreen}{RGB}{34,139,34}
\definecolor{ao(english)}{rgb}{0.0, 0.5, 0.0}
\begin{document}

\title[Self-interacting CBO]{Self-interacting CBO: Existence, uniqueness, and long-time convergence}
\thanks{
}

\author{Hui Huang}
\author{Hicham Kouhkouh}
\address{Hui Huang and Hicham Kouhkouh \newline \indent
University of Graz\newline \indent
Department of Mathematics and Scientific Computing, NAWI, Graz, Austria
}
\email{\texttt{hui.huang@uni-graz.at}, \texttt{hicham.kouhkouh@uni-graz.at}}
\thanks{}






\date{\today}

\begin{abstract}
A self-interacting dynamics that mimics the standard Consensus-Based Optimization (CBO) model is introduced. This single-particle dynamics is shown to converge to a unique invariant measure that approximates the global minimum of a given function. As an application, its connection to CBO with Personal Best introduced by C. Totzeck and M.-T. Wolfram (Math. Biosci. Eng., 2020) has been established.
\end{abstract}

\subjclass[MSC]{60H10, 65C35, 37N40, 90C26}

\keywords{Self-interacting diffusions; Consensus-Based Optimization; invariant measure; global optimization}










\maketitle

\section{Introduction}

Let $(\Omega,\mathfrak{F},\mathfrak{F}_t,\mathbb P)$ be a complete filtered probability space, and let $(B_t)_{t\geq 0}$ be a standard $d$-dimensional Brownian motion defined therein. We consider a McKean-Vlasov process $X_{\cdot}$ on $\RR^d$ whose dynamics is the one of the consensus-based optimization (CBO) model \cite{pinnau2017consensus}, and is governed by the following equation
\begin{equation}\label{CBO}
\begin{aligned}
    \rd X_t & =-\lambda(X_t-\mathfrak{m}_{\alpha}(\mathcal{L}_{t}[X])) \, \dt + \sigma D(X_t-\mathfrak{m}_{\alpha}(\mathcal{L}_{t}[X])) \, \rd B_t\,,
\end{aligned}
\end{equation}
where $\mathcal{L}_{t}[X] := \text{Law}(X_{t})$, and $\sigma>0$ is a real constant. Here, we employ anisotropic diffusion in the sense that $D(X):=\mbox{diag}(|x_1|,\dots,|x_d|)$ for any $X=(x_{1},\dots, x_{d})\in\RR^d$, which has been proven to handle high-dimensional problems more effectively \cite{carrillo2021consensus,fornasier2022anisotropic}.
The \textit{current global consensus} point $\mathfrak{m}_{\alpha}(\mathcal{L}_{t}[X])$ is defined by
 \begin{equation}\label{XaN}
    \mathfrak{m}_{\alpha}(\mathcal{L}_{t}[X]) = \frac{\int_{\RR^d} x \, \omega_{\alpha}^{f}(x)\; \mathcal{L}_{t}[X](\dx)}{\int_{\RR^d}\omega_{\alpha}^{f}(x)\; \mathcal{L}_{t}[X](\dx)}\,,
\end{equation}
and the weight function is chosen to be \; $\omega_\alpha^f(x):=\exp(-\alpha f(x))$. \; 
This choice of weight function is motivated by the well-known Laplace's principle \cite{miller2006applied,MR2571413}.
The CBO method is proposed to solve the optimization problem:
\begin{equation*}
	\text{Find } \; x^* \in \argmin_{x\in \RR^d} f(x),\,
\end{equation*}
where $f$ can be a non-convex non-smooth objective function that one wishes to minimize.

Under certain assumptions on the well-prepared initial data and parameters, it can be proved as in \cite{carrillo2018analytical} that, for any fixed $\alpha>0$ , $\rho_t$ converges to a Dirac measure. If moreover $\alpha$ is chosen large enough, it can also be shown that the latter Dirac measure can be supported on a point close to $x^*$, a global minimizer of $f$. 

A common way to run the CBO dynamics \eqref{CBO} is through the large particle limit (or mean-field limit). Namely, one considers the following $N$ interacting particle system $\{X_{\cdot}^i\}_{i=1}^N$ satisfying
\begin{equation}\label{CBOparticle}
\rd X_t^i=-\lambda(X_t^i - \mathfrak{m}_\alpha(\rho_t^{N}))\dt+\sigma D(X_t^i-\mathfrak{m}_\alpha(\rho_t^{N}))\rd B_t^{i},\quad i=1,\dots, N\,,
\end{equation}
where $\rho_t^N=\frac{1}{N}\sum_{i=1}^N\delta_{X_t^i}$ is the empirical measure associated to the particle system, and $\{B_.^i\}_{i=1}^N$ are $N$ independent $d$-dimensional Brownian motions. The convergence from the particle system \eqref{CBOparticle} to the McKean-Vlasov process \eqref{CBO} \cite{fornasier2020consensus,huang2022mean,gerber2023mean} is called the mean-field limit, hence the name \textit{mean-field interaction}.

In the long-time limit and for well-chosen parameters (in particular for $\alpha\gg 1$), this system of (finitely many) particles would converge to a single point which is the global minimizer of $f$, and it coincides with the support of the long-time limit of $\mathcal{L}_{t}[X]$, the law of the McKean-Vlasov process defined above. In other words, this suggests that a Dirac measure supported on the global minimizer is an invariant measure for \eqref{CBO}. By definition, a probability measure $\mu$ is \underline{invariant} for $X_{\cdot}$ if and only if it is a fixed point for the adjoint of its (nonlinear) transition semigroup $\{T_{t}\}_{t\geq 0}$, that is $T^{*}_{t}\mu = \mu$ for all $t> 0$. The adjoint operator $T^{*}_{t}$ is defined on the space of probability measures as $T^{*}_{t}\nu := \mathcal{L}_{t}[X]$ when $\nu=\mathcal{L}_{0}[X]$. 
Using the generator $A$ of the semigroup $T_{t}$ we have the characterization: a probability measure $\mu$ is invariant for  $\{T_{t}\}_{t\geq 0}$ if and only if
\begin{equation*}
    \int_{\mathbb{R}^{d}} A\varphi(x)\,\text{d}\mu(x) = 0\quad \forall \, \varphi \text{ in the domain of } A.
\end{equation*}
This is the distributional definition of $\mu$ being a solution to the Kolmogorov-Fokker-Planck equation $A^{*}\mu = 0$, where $A^{*}$ is the adjoint operator of the generator $A$. For a McKean-Vlasov process, $A$ is a nonlinear operator.

For the process \eqref{CBO}, it can be easily verified that  any Dirac measure (not necessarily supported on the global minimizer) is invariant. This non-uniqueness is mainly due to two reasons: in the drift, the term $-X$ is not strong enough with respect to the mean-field term $\mathfrak{m}_{\alpha}(\cdot)$, and the diffusion degenerates. 
With these features, it becomes difficult to study the dynamical properties of \eqref{CBO}, in particular its long-time behavior. 
To remedy to this situation, we propose a modification of \eqref{CBO} which addresses exactly these two mentioned issues. Hence, we consider a \textit{rescaled} CBO given by
\begin{equation}\label{CBO kappa intro}
    \rd X_t =-\lambda(X_t - \kappa\,\mathfrak{m}_{\alpha}(\mathcal{L}_{t}[X]))\dt+\sigma\left(\frac{1}{\alpha}\mathds{I}_{d} + D(X_t - \kappa\, \mathfrak{m}_{\alpha}(\mathcal{L}_{t}[X])) \right)\rd B_t
\end{equation}
complemented with a deterministic initial condition $X_{0}=x\in \mathbb{R}^{d}$,  where $\mathds{I}_{d}$ is the $d$-dimensional identity matrix, $0<\kappa<1$ is a small positive constant, and $\alpha$ is the same as in the consensus point\footnote{In fact we could have chosen any positive constant $\delta>0$, and write the diffusion matrix as $\delta\,\mathds{I}_{d} + \sigma\,D(X_t - \kappa \,\mathfrak{m}_\alpha(\mathcal{L}_{t}[X]))$.}. 
As we shall later see, 
$\EE[X_{t=\infty}]\approx \kappa\, x^*$ for sufficiently large $\alpha\gg 1$.  Clearly, the  approximation using a system of $N$ interacting particles is still valid for \eqref{CBO kappa intro}, and it is of the form
 \begin{equation}\label{CBOkappa particle}
\rd X_t^i =-\lambda \left((X_t^i - \kappa\,\mathfrak{m}_{\alpha}(\rho_t^N) \right) \dt+\sigma\left(\frac{1}{\alpha}\mathds{I}_{d} + D(X_t^i - \kappa\, \mathfrak{m}_{\alpha}(\rho_t^N) \right)\rd B_t^i,\quad i=1,\dots, N\,.
\end{equation}
The dynamics \eqref{CBOkappa particle} has recently also been used in \cite{herty2024multiscale}, although in a different context.  

Our \textbf{main contribution}  is the approximation of the invariant measure of \eqref{CBO kappa intro} using only one \textit{self-interacting} particle, rather than the conventional $N$-particle system \eqref{CBOkappa particle}. We will also show that the invariant measure exists and is moreover unique. A polynomial rate of convergence towards it is also proven.


\section{The model of self-interacting CBO}

We introduce the following self-interacting CBO model with respect to the rescaled CBO \eqref{CBO kappa intro}
\begin{equation}\label{CBOself'}
    \rd Y_t = - \lambda \big( Y_t - \kappa \, \mathfrak{m}_\alpha(\mE_t[Y])\big)\dt+\sigma\left(\frac{1}{\alpha}\mathds{I}_{d} + D(Y_t - \kappa \, \mathfrak{m}_\alpha(\mE_t[Y]))\right) \rd B_t\,,
\end{equation}
complemented with the deterministic\footnote{One particle only substitutes for the the $N$-interacting particles. So we do not need a randomly distributed initial condition.} initial condition $Y_{0}=x\in \mathbb{R}^{d}$, and where 
\begin{equation*}
    \mE_{t} [Y]:=\frac{1}{t}\int_0^t\delta_{Y_r}\,\rd r=\int_0^1\delta_{Y_{ts}}\,\rd s,\quad t>0
\end{equation*}
is usually referred to as the \textit{occupation measure}. Indeed, given an open set $\mathcal{O}\subset \mathbb{R}^{d}$, we have $\mathcal{E}_{t}[X](\mathcal{O}) = \frac{1}{t}|\{ r\in [0,t]\,:\, X_{r}\in \mathcal{O}\}|$, where $|I|$ denotes Lebesgue measure of an interval $I$. So $\mathcal{E}_{t}[X](\mathcal{O})$ measures the average of the time during which the process $X$ \textit{occupies} an open set $\mathcal{O}$ up to time $t$. \\
With the definition of the current consensus point \eqref{XaN}, we now have
\begin{equation*}
    \mathfrak{m}_\alpha(\mE_t[Y]) = \frac{\int_0^tY_r\exp(-\alpha f(Y_r))\rd r}{\int_0^t\exp(-\alpha f(Y_r))\rd r}=\frac{\int_0^1Y_{st}\exp(-\alpha f(Y_{st}))\rd s}{\int_0^1\exp(-\alpha f(Y_{st}))\rd s}\,.
\end{equation*}
The above term $\mathfrak{m}_\alpha(\mE_t[Y])$ has been used in \cite[Equation (2.4)]{totzeck2020consensus}, albeit with a different motivation. There, it is referred to as the ``Personal Best'' term, which approximates the best location along the entire trajectory of the particle $Y$ up to time $t$ when $\alpha$ is sufficiently large. Comparing \eqref{CBOself'} to our rescaled CBO model
\begin{equation}\label{CBO kappa}
    \rd X_t  =-\lambda\big(X_t - \kappa\, \mathfrak{m}_\alpha(\mathcal{L}_t[X])\big)\dt+\sigma\left(\frac{1}{\alpha}\mathds{I}_{d} + D(X_t - \kappa \, \mathfrak{m}_\alpha(\mathcal{L}_t[X])) \right)\rd B_t\,,
\end{equation}
the difference is in $\mathfrak{m}_{\alpha}(\cdot)$: in the first it is evaluated in $\mE_t[Y]$, while in the second we have $\mathcal{L}_t[X]=\text{Law}(X_{t})$. 

Remarkably, from a practical standpoint, simulating the occupation measure of the self-interacting process \eqref{CBOself'} requires only one single particle. This feature distinctly sets it apart from the conventional $N$-particle approximation \eqref{CBOkappa particle}, where its global convergence usually requires an additional condition that the number of particles $N$ goes to infinity \cite{fornasier2021consensus,fornasier2021consensus1}. 
Moreover, a uniform-in-time mean-field limit for particle system \eqref{CBOkappa particle} has recently been established in \cite{huang2024uniform}, whereas for the particle system \eqref{CBOparticle}, this remains an open problem. 
On the other hand, such self-interacting process has recently demonstrated its applicability in training two-layer neural networks 
\cite{du2023self}, and used in games and control \cite{kouhkouh24one}.

\hfill

\begin{assum}\label{assum1}
	We assume the following properties for the objective function.
	\begin{enumerate}
		\item $f:~\RR^d\to \RR$ is bounded from below by $\underline f=\min f$, and there exist $L_{f}>0, s\geq 0$ constants such that
		\begin{equation*}
			|f(x)-f(y)|\leq L_f(1+|x|+|y|)^s|x-y| \quad \forall x,y\in \RR^d\,.
		\end{equation*}
		\item There exist constants $c_1,c_2>0$  and $\ell>0$ such that
		\begin{equation*}
			c_1(|x|^\ell-1)	\leq f-\underline f\leq c_2(|x|^\ell+1)\quad \forall x\in \RR^d.
		\end{equation*}
	\end{enumerate}
\end{assum}

\begin{lemma}\label{lem: wellposed}  
Let $f$ satisfy Assumption \ref{assum1}, then both the dynamics \eqref{CBOself'} and \eqref{CBO kappa} have strong solutions. 
\end{lemma}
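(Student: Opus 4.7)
The plan is to handle the two equations by classical fixed-point arguments, both relying on the local Lipschitz continuity of the map $\mu \mapsto \mathfrak{m}_\alpha(\mu)$ on probability measures with controlled moments. Under Assumption \ref{assum1}, one has the estimate
\[
|\mathfrak{m}_\alpha(\mu) - \mathfrak{m}_\alpha(\nu)| \leq C(R,\alpha)\, W_p(\mu,\nu)
\]
for measures $\mu,\nu \in \mathcal{P}_p(\RR^d)$ whose $p$-th moments are bounded by $R$, with $p$ chosen large enough relative to $\ell$ and $s$ from Assumption \ref{assum1}. The constant $C(R,\alpha)$ comes from locally Lipschitz bounds on $x\mapsto xe^{-\alpha f(x)}$ and $x\mapsto e^{-\alpha f(x)}$, together with a lower bound on the denominator of $\mathfrak{m}_\alpha$ obtained from Jensen's inequality $\int e^{-\alpha f}\,\rd\mu \geq \exp(-\alpha \int f\,\rd\mu)$ combined with the moment bound.

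For \eqref{CBO kappa}, I would follow the now-standard scheme of \cite{carrillo2018analytical,carrillo2021consensus}. Fix a continuous curve $\mu_{\cdot} \in C([0,T_{0}]; \mathcal{P}_{p}(\RR^d))$, substitute it for $\mathcal{L}_t[X]$ in the coefficients of \eqref{CBO kappa}, and solve the resulting decoupled SDE: with $\mu_t$ frozen, both drift and diffusion are globally Lipschitz in $X_t$ (since $D(\cdot)$ is Lipschitz and the extra $\frac{1}{\alpha}\mathds{I}_d$ is constant), so a unique strong solution $X^\mu$ exists. Applying Itô's formula to $|X_t^\mu|^p$ and exploiting the dissipative term $-\lambda X_t$ yields a uniform-in-time $p$-th moment bound depending only on $|x|$ and the moments of $\mu$. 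One then shows that the Picard map $\Phi: \mu_\cdot \mapsto \mathcal{L}[X^\mu_\cdot]$ is a contraction on a small interval $[0,T_0]$ in $C([0,T_0]; \mathcal{P}_p)$ via the Lipschitz estimate above and a Gronwall argument, and iterates this to all $T>0$ using the moment control.

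For \eqref{CBOself'}, I treat the SDE pathwise, as one with memory. The factor $1/t$ in the definition of $\mE_t[Y]$ cancels in the ratio defining $\mathfrak{m}_\alpha$, so that for any $y \in C([0,T];\RR^d)$ the map $t \mapsto \mathfrak{m}_\alpha(\mE_t[y])$ is continuous on $(0,T]$ and extends continuously to $t=0$ with value $y_0$. The needed analogue of the Lipschitz estimate reads
\[
\sup_{t\in[0,T]} \bigl|\mathfrak{m}_\alpha(\mE_t[y^1]) - \mathfrak{m}_\alpha(\mE_t[y^2])\bigr| \leq L(R,\alpha)\, \|y^1-y^2\|_{\infty,[0,T]}
\]
for $\|y^i\|_{\infty,[0,T]}\leq R$, obtained by applying the local Lipschitz bounds on $xe^{-\alpha f(x)}$ and $e^{-\alpha f(x)}$ pointwise in $r$ and then integrating numerator and denominator separately. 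Freezing the memory of a candidate path $y$, the resulting SDE has Lipschitz coefficients and is uniquely solvable, so Picard iteration in $L^2(\Omega; C([0,T_0]; \RR^d))$ converges for small $T_0$. Itô estimates on $|Y_t|^p$ provide global moment bounds and extend the solution to all $T>0$.

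The main obstacle in both parts is securing uniform lower bounds on the denominators of $\mathfrak{m}_\alpha$ along the Picard iterates, which reduces to uniform $p$-th moment control of the iterates. This is delivered by the dissipative drift $-\lambda(\cdot)$ and the polynomial growth bound in Assumption \ref{assum1}(ii), which close the Itô estimates. Once this is in place the contraction is routine; the self-interacting case requires slightly more bookkeeping because the Lipschitz estimate on $\mathfrak{m}_\alpha(\mE_t[\cdot])$ involves the sup norm of the path rather than a Wasserstein distance on laws, but this is naturally absorbed by working in $L^2(\Omega; C([0,T_0]; \RR^d))$.
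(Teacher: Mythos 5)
Your proposal is correct in substance and lands on the same two-pronged strategy as the paper, which disposes of the lemma by citation: for the self-interacting equation \eqref{CBOself'} the paper invokes the argument of \cite{totzeck2020consensus} (local Lipschitz continuity and linear growth of the path-dependent coefficients), which is exactly what your frozen-memory Picard iteration in $L^2(\Omega;C([0,T_0];\RR^d))$ implements, including the correct observation that $t\mapsto \mathfrak{m}_\alpha(\mE_t[y])$ extends continuously to $t=0$ with value $y_0$. The one genuine divergence is in the mean-field equation \eqref{CBO kappa}: the paper follows \cite{carrillo2018analytical} and \cite{gerber2023mean} in using the Leray--Schauder fixed point theorem, whereas you run a Banach contraction on $C([0,T_0];\mathscr{P}_p)$. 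Both routes work here, but they buy different things. Leray--Schauder only needs continuity and compactness of the map $\mu_\cdot\mapsto \mathcal{L}[X^\mu_\cdot]$ and sidesteps the need for a uniform Lipschitz constant for $\mathfrak{m}_\alpha$; your contraction argument yields uniqueness of the law essentially for free, but it hinges on the fact that the Wasserstein--Lipschitz estimate of Lemma \ref{lem: useful estimates} only holds with a constant $L_{\mathfrak m}$ depending on $R$ when at least one measure lies in $\mathscr{P}_{2,R}(\RR^d)$, so you must verify that the Picard map leaves a fixed moment ball invariant (uniformly over iterates and over $[0,T_0]$) before the contraction estimate closes. You correctly identify this as the main obstacle and resolve it via the It\^o moment estimates driven by the dissipative drift $-\lambda X_t$ and Assumption \ref{assum1}(ii), so the argument is complete; it is simply a more quantitative (and slightly more demanding) route than the topological one the paper outsources to the cited references.
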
 

\begin{proof}
The well-posedness of the SDE \eqref{CBOself'} follows the same arguments presented in \cite[Theorem 2]{totzeck2020consensus}, demonstrating that both the drift and diffusion terms in \eqref{CBOself'} satisfy the local Lipschitz and linear growth conditions. The well-posedness of the SDE \eqref{CBO kappa} can be verified similarly as in \cite[Theorem 3.2]{carrillo2018analytical} or \cite[Theorem 2.4]{gerber2023mean} using Leray-Schauder fixed point theorem. 
\end{proof}

In what follows, $\|\cdot\|$ denotes the Frobenius norm of a matrix and $|\cdot|$ is the standard Euclidean norm in $\RR^d$; $\mathscr{P}(\RR^d)$ denotes the space of probability measures on $\RR^d$, and $\mathscr{P}_p(\RR^d)$ with $p\geq 1$ contains all $\mu\in \mathscr{P}(\RR^d)$ such that $\mu(|\cdot|^p):=\int_{\RR^d}|x|^p\mu(\dx)<\infty$; it is equipped with $p$-Wasserstein distance $W_p(\cdot,\cdot)$. 
Lastly, we define $\mathscr{P}_{p,R}(\mathbb{R}^{d}) := \{\mu \in \mathscr{P}_{p}(\mathbb{R}^{d})\,:\, \mu(|\cdot|^{p})\leq R\}$ where $R>0$ is a constant. 

One can further generalize $\mE_{t}[\cdot]$ and $\mathcal{L}_{t}[\cdot]$ to their weighted forms
\begin{equation}\label{eq:weighted measures}
\begin{aligned}
    & \mE_t^\pi[Y] := \int_0^1 \delta_{Y_{st}}\,\pi_t(\text{d}s)\quad \text{ and } \quad \mathcal{L}^{\varpi}_{t}[X] := \int_{0}^{1} \mathcal{L}_{st}[X]\,\varpi_{t}(\text{d}s)\\
    & \quad \quad \text{ for } \quad \pi,\varpi\in \Pi :=\left\{\pi=(\pi_t)_{t\geq 0}:~\pi_t\in \mathscr{P}([0,1])\right\}\,,
\end{aligned}
\end{equation}
where $\mathscr{P}([0,1])$ is the space of probability measures on $[0,1]$.
Then we have the generalized self-interacting CBO for some $\pi \in \Pi$ 
\begin{equation}\label{CBOself pi}
	\rd Y_t = - \lambda ( Y_t -\kappa \,\mathfrak{m}_\alpha(\mE_t^\pi[Y])) \dt + \sigma \left(\frac{1}{\alpha}\mathds{I}_{d} + D(Y_t - \kappa\, \mathfrak{m}_\alpha(\mE_t^\pi[Y])) \right) \rd B_t\,, \;Y_{0}=x\in \mathbb{R}^{d}
\end{equation}
and the generalized \textit{rescaled} mean-field CBO for some $\varpi\in \Pi$ 
\begin{equation}\label{CBO pi}
	\rd X_t = - \lambda ( X_t -\kappa \,\mathfrak{m}_\alpha(\mathcal{L}_t^\varpi[X])) \dt + \sigma \left(\frac{1}{\alpha}\mathds{I}_{d} + D(X_t - \kappa\, \mathfrak{m}_\alpha(\mathcal{L}_t^\varpi[X])) \right) \rd B_t\,,\; X_{0}=x\in \mathbb{R}^{d}.
\end{equation}
In the latter two cases, we have 
\begin{equation*}
	\mathfrak{m}_\alpha(\mathcal{L}_t^\varpi[X])) = 
	\frac{ 
	\int_{0}^{1}\int_{\RR^d} x \, \omega_{\alpha}^{f}(x)\mathcal{L}_{st}[X](\dx) \, \varpi(\text{d}s)
	    }{
	\int_{0}^{1}\int_{\RR^d}\omega_{\alpha}^{f}(x)\mathcal{L}_{st}[X](\dx)\,\varpi(\text{d}s)
	    }, \:
	\text{ and } \;
	\mathfrak{m}_\alpha(\mathcal{E}_t^\pi[Y])) = 
	    \frac{ 
	    \int_{0}^{1}\int_{\RR^d} x \, \omega_{\alpha}^{f}(x)\delta_{Y_{st}}(\dx)\pi_{t}(\text{d}s) 
	    }{
	    \int_{0}^{1}\int_{\RR^d}\omega_{\alpha}^{f}(x)\delta_{Y_{st}}(\dx) \pi_{t}(\text{d}s)
    }.
\end{equation*}
It is obvious that $\mE_t[\cdot]$ can be represented as $\mE_t^\pi[\cdot]$ with $\pi_t$ being the Lebesgue measure, while  $\mathcal{L}_{t}[\cdot]$ corresponds to choosing $\varpi\equiv \delta_{1}$, a Dirac measure, in  $\mathcal{L}_{t}^{\varpi}[\cdot]$.

\section{Equivalence proof}
 
Our main goal in the sequel is to establish a level of equivalence between mean-field interacting model \eqref{CBO kappa} and self-interacting model \eqref{CBOself'}, in the sense that they both evolve eventually towards the same equilibrium state.  This allows us to run CBO algorithm of \eqref{CBOself'} instead, and as an alternative to the $N$-interacting particles \eqref{CBOkappa particle}. 

First, let us recall some estimates on $\mathfrak{m}_{\alpha}(\mu)$ from \cite[Corollary 3.3, Proposition A.3]{gerber2023mean}.
\begin{lemma}\label{lem: useful estimates}
Suppose that $f$ satisfies Assumption \ref{assum1}. Then for all $R>0$, 
there exists some constant $L_{\mathfrak{m}}>0$ depending on $R$ such that
\begin{equation}
    |\mathfrak{m}_{\alpha}(\mu)-\mathfrak{m}_{\alpha}(\nu)|\leq L_{\mathfrak{m}} W_2(\mu,\nu)\quad \forall (\mu,\nu)\in \mathscr{P}_{2,R}(\RR^d)\times \mathscr{P}_{2}(\RR^d)\,.
\end{equation}
Moreover for all $q\geq 1$, there exists constant $C_1>0$ depending on $q,c_1,c_2,\ell$ such  that
\begin{equation}
    |\mathfrak{m}_{\alpha}(\nu)|\leq C_1\left(\int_{\RR^d}|x|^q\nu(dx)\right)^{\frac{1}{q}}\quad \forall \nu\in \mathscr{P}_q(\RR^d)\,.
\end{equation}
\end{lemma}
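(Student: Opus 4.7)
The plan is to treat the two estimates separately, with both hinging on a uniform lower bound for the normalization constant $Z_{\alpha}(\mu):=\int_{\RR^d}\omega_{\alpha}^{f}(x)\,\mu(\dx)$ and on a controlled (locally Lipschitz / polynomial growth) behavior of the numerator $x\omega_{\alpha}^{f}(x)$, both of which follow from Assumption \ref{assum1}. I would first isolate as a preliminary step the estimate
\begin{equation*}
    Z_{\alpha}(\mu)\;\geq\; \tfrac{1}{2}\exp\!\Big(-\alpha\big(\underline{f}+c_{2}(r_{\mu}^{\ell}+1)\big)\Big),\qquad r_{\mu}:=\big(2\textstyle\int |x|^{q}\,\mu(\dx)\big)^{1/q},
\end{equation*}
obtained by writing $Z_{\alpha}(\mu)\geq e^{-\alpha(\underline{f}+c_{2}(r^{\ell}+1))}\mu(B_{r})$ via the upper bound $f\leq\underline{f}+c_{2}(|x|^{\ell}+1)$ on $B_{r}$, and then using Markov's inequality with $r=r_{\mu}$ to guarantee $\mu(B_{r_{\mu}})\geq 1/2$. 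This single inequality is the workhorse for both parts.

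For the growth bound, I would apply Jensen's inequality to the convex function $|\cdot|^{q}$ against the probability measure $\omega_{\alpha}^{f}\,\mu/Z_{\alpha}(\mu)$, yielding
\begin{equation*}
    |\mathfrak{m}_{\alpha}(\mu)|^{q}\;\leq\;\frac{\int |x|^{q}\omega_{\alpha}^{f}\,\mu(\dx)}{Z_{\alpha}(\mu)}\;\leq\;\frac{e^{-\alpha\underline{f}}}{Z_{\alpha}(\mu)}\int |x|^{q}\,\mu(\dx),
\end{equation*}
using $\omega_{\alpha}^{f}\leq e^{-\alpha\underline{f}}$ in the numerator. Inserting the preliminary lower bound on $Z_{\alpha}(\mu)$ gives the desired inequality with a constant $C_{1}$ depending only on $q,c_{1},c_{2},\ell$ (and on $\alpha$, which is treated as fixed, as in \cite{gerber2023mean}).

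For the Lipschitz estimate, I would use the classical common-denominator decomposition
\begin{equation*}
    \mathfrak{m}_{\alpha}(\mu)-\mathfrak{m}_{\alpha}(\nu)\;=\;\frac{1}{Z_{\alpha}(\mu)}\Big(\textstyle\int x\omega_{\alpha}^{f}\dd\mu-\int x\omega_{\alpha}^{f}\dd\nu\Big)\;+\;\mathfrak{m}_{\alpha}(\nu)\cdot\frac{Z_{\alpha}(\nu)-Z_{\alpha}(\mu)}{Z_{\alpha}(\mu)},
\end{equation*}
then take an optimal $W_{2}$-coupling $(X,Y)$ between $\mu$ and $\nu$ so that both differences of integrals become expectations of increments of the (locally Lipschitz) maps $x\mapsto x\omega_{\alpha}^{f}(x)$ and $x\mapsto \omega_{\alpha}^{f}(x)$. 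Assumption \ref{assum1}(1) gives $|\omega_{\alpha}^{f}(x)-\omega_{\alpha}^{f}(y)|\leq \alpha L_{f}e^{-\alpha\underline{f}}(1+|x|+|y|)^{s}|x-y|$, and after Cauchy–Schwarz the $(1+|X|+|Y|)^{s}$ factor is absorbed into the moment bound $R$ on $\mu$ (and the corresponding moment of $\nu$, which is itself controlled by $W_{2}(\mu,\nu)+R$). The $1/Z_{\alpha}(\mu)$ factor is handled by the preliminary lower bound with $q=2$ and $r_{\mu}\leq (2R)^{1/2}$, together with the $L^{\infty}$ bound $|\mathfrak{m}_{\alpha}(\nu)|$ obtained in the growth part.

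The main obstacle is that the integrand $x\omega_{\alpha}^{f}(x)$ is only locally Lipschitz with polynomial growth factor $(1+|x|+|y|)^{s}$, so the $W_{1}$-distance is insufficient; the role of $W_{2}$ and of the moment cap $R$ in the space $\mathscr{P}_{2,R}(\RR^d)$ is precisely to absorb this growth via Hölder. Consequently $L_{\mathfrak{m}}$ must be allowed to depend on $R$, which is why the statement restricts one of the two arguments to $\mathscr{P}_{2,R}(\RR^d)$. The details for both parts follow exactly the arguments in \cite[Corollary 3.3, Proposition A.3]{gerber2023mean}, so one may alternatively invoke them directly.
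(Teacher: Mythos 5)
The paper itself offers no proof of this lemma: it is quoted directly from \cite[Corollary~3.3, Proposition~A.3]{gerber2023mean}, so your proposal has to be measured against the standard argument there. Your treatment of the Lipschitz estimate (common-denominator decomposition, optimal coupling, lower bound on $Z_{\alpha}$ via Markov's inequality, control of the polynomial factor through the moment cap $R$, and the reduction to small $W_{2}$ using the growth bound) is the standard route and is essentially sound, modulo the usual bookkeeping (for $s>0$ the factor $|X|(1+|X|+|Y|)^{s}$ calls for moments of order $2+2s$, which $\mathscr{P}_{2,R}$ alone does not provide, so this step needs the same care as in the cited reference).

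The growth estimate, however, has a genuine gap. Your chain of inequalities yields
\begin{equation*}
    |\mathfrak{m}_{\alpha}(\nu)|^{q}\;\le\;\frac{e^{-\alpha\underline f}}{Z_{\alpha}(\nu)}\int|x|^{q}\,\nu(\dx)\;\le\;2\,e^{\alpha c_{2}(r_{\nu}^{\ell}+1)}\int|x|^{q}\,\nu(\dx),\qquad r_{\nu}=\Big(2\textstyle\int|x|^{q}\,\nu(\dx)\Big)^{1/q},
\end{equation*}
so the prefactor depends on $\nu$ through its $q$-th moment and blows up exponentially in $\big(\int|x|^{q}\nu(\dx)\big)^{\ell/q}$. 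This is strictly weaker than the claimed bound, in which $C_{1}$ is uniform over all of $\mathscr{P}_{q}(\RR^{d})$. The tell is that your argument never invokes the coercivity lower bound $f-\underline f\ge c_{1}(|x|^{\ell}-1)$, even though the lemma's constant is stated to depend on $c_{1}$. That bound is precisely what makes the estimate uniform: it forces the weight $e^{-\alpha(f-\underline f)}$ to decay like $e^{-\alpha c_{1}|x|^{\ell}}$ at infinity, so that the contribution of large $|x|$ to the numerator $\int|x|^{q}e^{-\alpha(f-\underline f)}\,\nu(\dx)$ can be compared to the denominator after splitting the integral at a radius determined by $c_{1},c_{2},\ell$ (not by the moment of $\nu$), as in the proof of Proposition~A.3 of the cited reference. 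Bounding the numerator crudely by $e^{-\alpha\underline f}\int|x|^{q}\,\nu(\dx)$ discards this decay, and the defect cannot be repaired by a cleverer choice of $r$ in the Markov step.
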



Let us introduce the following notation for $(x,\mu)\in \mathbb{R}^{d}\times \mathscr{P}(\mathbb{R}^{d})$
\begin{equation*}
	b(x,\mu):=-\lambda(x-\kappa\,\mathfrak{m}_{\alpha}(\mu)),\quad \text{ and }\quad \bm{\sigma}(x,\mu):=\left(\frac{1}{\alpha}\mathds{I}_{d} + \sigma D(x-\kappa\,\mathfrak{m}_{\alpha}(\mu))\right),
\end{equation*}
where $\lambda, \sigma,\kappa,\alpha$ are positive constants. Then it is easy to check that the following estimates hold (see the supplementary material \ref{app: lem}  in the appendix).

\begin{lemma}\label{lemsta}
For all $x,y\in\RR^d$ and $(\mu,\nu)\in \mathscr{P}_{2,R}(\RR^d)\times \mathscr{P}_{2}(\RR^d)$ it holds that
\begin{equation}\label{dissip}
    2 \la b(x,\mu)-	b(y,\nu),x-y\ra +\|\bm\sigma(x,\mu)-\bm\sigma(y,\nu)\|^2 
    \leq -\mathfrak{a}|x-y|^2+ \mathfrak{b} W_2^2(\mu,\nu)\,.
\end{equation}
Also, there exists $K>0$ such that for all $x\in\RR^d, \nu\in \mathscr{P}_2(\RR^d), \delta>0$, it holds that
\begin{align}\label{lemeq}
    2\la b(x,\nu),x\ra +(1+\delta)\|\bm\sigma(x,\nu)\|^2
    \leq -\mathfrak{c}|x|^2+ K(1 + \delta + \nu(|\cdot|^{2})) \,.
\end{align}
When  $\lambda > 8\sigma^{2}$, $\delta=1$, and for $0<\kappa \ll 1$, the above inequalities are satisfied with  $\mathfrak{a} >2 \mathfrak{b} \geq 0$, and $\mathfrak{c}>0$.
\end{lemma}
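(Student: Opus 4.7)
The lemma collects two routine a-priori estimates on the coefficients $(b,\bm{\sigma})$; the strategy is direct computation combined with Young's inequality and the two bounds on $\mathfrak{m}_{\alpha}$ from Lemma \ref{lem: useful estimates}. A key observation used throughout is that the matrix-valued map $z\mapsto D(z)=\mathrm{diag}(|z_{1}|,\dots,|z_{d}|)$ is $1$-Lipschitz in Frobenius norm, since the reverse triangle inequality gives $\|D(z)-D(w)\|^{2}=\sum_{i}(|z_{i}|-|w_{i}|)^{2}\leq |z-w|^{2}$. Combined with the fact that adding the constant matrix $\tfrac{1}{\alpha}\mathds{I}_{d}$ cancels out in differences, this yields $\|\bm{\sigma}(x,\mu)-\bm{\sigma}(y,\nu)\|^{2}\leq \sigma^{2}\bigl|(x-y)-\kappa(\mathfrak{m}_{\alpha}(\mu)-\mathfrak{m}_{\alpha}(\nu))\bigr|^{2}$.

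For inequality \eqref{dissip}, I would expand $\langle b(x,\mu)-b(y,\nu),x-y\rangle=-\lambda|x-y|^{2}+\lambda\kappa\langle \mathfrak{m}_{\alpha}(\mu)-\mathfrak{m}_{\alpha}(\nu),x-y\rangle$, apply Young's inequality with a parameter $\eta>0$ on the mixed term, and then apply the elementary bound $(a+b)^{2}\leq(1+\eta)a^{2}+(1+\eta^{-1})b^{2}$ to the diffusion difference. Invoking the local-Lipschitz estimate $|\mathfrak{m}_{\alpha}(\mu)-\mathfrak{m}_{\alpha}(\nu)|\leq L_{\mathfrak{m}}W_{2}(\mu,\nu)$ on the $\mathscr{P}_{2,R}$ side produces constants $\mathfrak{a}=2\lambda-\sigma^{2}(1+\eta)-\lambda\kappa$ and $\mathfrak{b}=\bigl(\lambda\kappa+\sigma^{2}(1+\eta^{-1})\kappa^{2}\bigr)L_{\mathfrak{m}}^{2}$. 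Choosing $\eta=1$ and using $\lambda>8\sigma^{2}$ yields $\mathfrak{a}\geq \tfrac{7}{4}\lambda-\lambda\kappa$, while $\mathfrak{b}=O(\kappa)$, so $\mathfrak{a}>2\mathfrak{b}\geq 0$ follows for $0<\kappa\ll 1$.

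For inequality \eqref{lemeq}, I would expand $2\langle b(x,\nu),x\rangle=-2\lambda|x|^{2}+2\lambda\kappa\langle \mathfrak{m}_{\alpha}(\nu),x\rangle$, use Young to absorb the cross-term into $\lambda|x|^{2}+\lambda\kappa^{2}|\mathfrak{m}_{\alpha}(\nu)|^{2}$, and bound $\|\bm{\sigma}(x,\nu)\|^{2}\leq (1+\eta_{1})\sigma^{2}|x-\kappa\mathfrak{m}_{\alpha}(\nu)|^{2}+(1+\eta_{1}^{-1})d/\alpha^{2}$, then split $|x-\kappa\mathfrak{m}_{\alpha}(\nu)|^{2}$ again with a parameter $\eta_{2}$. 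Taking $\delta=1$, $\eta_{1}=\eta_{2}=1$ gives a coefficient of $|x|^{2}$ equal to $-(\lambda-8\sigma^{2})$, so $\mathfrak{c}=\lambda-8\sigma^{2}>0$ under the assumed threshold. The growth bound from Lemma \ref{lem: useful estimates} with $q=2$ gives $|\mathfrak{m}_{\alpha}(\nu)|^{2}\leq C_{1}^{2}\,\nu(|\cdot|^{2})$, so all remaining terms can be absorbed into $K(1+\delta+\nu(|\cdot|^{2}))$ for a constant $K$ depending only on $\sigma,\lambda,\kappa,\alpha,d,C_{1}$.

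The computation is essentially routine; the only point that requires care is the book-keeping of the Young parameters so that the coefficient of $|x-y|^{2}$ (resp.\ $|x|^{2}$) stays strictly below $-2\mathfrak{b}$ (resp.\ $0$) precisely under the stated threshold $\lambda>8\sigma^{2}$ and for $\kappa$ sufficiently small. No step presents a real obstacle, which is consistent with the authors relegating the explicit verification to the appendix.
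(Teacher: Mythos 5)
Your proposal is correct and follows essentially the same route as the paper's appendix proof: direct expansion of the drift and diffusion terms, Young's inequality, the $1$-Lipschitz property of $D(\cdot)$ in Frobenius norm, and the two estimates on $\mathfrak{m}_{\alpha}$ from Lemma \ref{lem: useful estimates}; with $\eta=1$ your constants $\mathfrak{a}=2\lambda-\lambda\kappa-2\sigma^{2}$ and $\mathfrak{b}=(\lambda+2\sigma^{2}\kappa)\kappa L_{\mathfrak{m}}^{2}$ coincide exactly with those in Remark \ref{rmk: coeff}. Your value of $\mathfrak{c}$ differs slightly from the paper's (owing to a different split of the cross term), but since the lemma only asserts $\mathfrak{c}>0$ this is immaterial.
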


\begin{remark}\label{rmk: coeff}
    In Lemma \ref{lemsta}, the constants of the first inequality are   $\mathfrak{a} = 2\lambda-\lambda \kappa - 2\sigma^{2}$ and $\mathfrak{b} = (\lambda + 2\,\sigma^{2}\,\kappa)\,\kappa\, L_{\mathfrak{m}}^{2}$, and in the second inequality, we have for any $\delta> 0$, the constants $\mathfrak{c} = (2\lambda - \lambda \kappa ) - 2\sigma^{2}(1+\delta)(1+\kappa)$, and $K= \max\left\{ 2\sigma^{2}d/\alpha^{2} ,\; \big[\lambda \kappa \,C_{1}^{2} + 2\,\sigma^{2}(1+\delta)(1+\kappa)\,\kappa\, C_{1}^{2}\big] \right\}$. 
    See the supplementary material \ref{app: lem} in the appendix.
\end{remark}

Observe that the assumption of having at least one of the measures chosen within  $\mathscr{P}_{2,R}(\mathbb{R}^{d})$ is used to guarantee the validity of the estimates in Lemma \ref{lem: useful estimates}. 

Here and in what follows, we shall call \underline{the data of the problem} the parameters $\lambda, \sigma, \kappa, \alpha$, together with the constants in Assumption \ref{assum1}, in Lemma \ref{lem: useful estimates}, and in Lemma \ref{lemsta}.

\begin{proposition}\label{prop:existence of inv meas}
The dynamics \eqref{CBO kappa} has an invariant measure $\mu_\alpha^*$ in $\mathscr{P}_{2,R}(\mathbb{R}^{d})$, and $R$ only depends on the data of the problem.
\end{proposition}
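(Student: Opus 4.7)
My approach is a Schauder fixed-point argument applied to a frozen-coefficient auxiliary SDE. First, I would derive a uniform-in-time second moment estimate: applying It\^o's formula to $|X_t|^2$ along \eqref{CBO kappa}, taking expectation, and invoking Lemma \ref{lemsta} (second inequality with $\delta=0$ and $\nu=\mathcal{L}_t[X]$, so that $\nu(|\cdot|^2)=\mathbb{E}|X_t|^2$) yields
\begin{equation*}
    \frac{d}{dt}\mathbb{E}|X_t|^2 \;\leq\; -(\mathfrak{c}-K)\,\mathbb{E}|X_t|^2 + K.
\end{equation*}
Provided the parameters enforce $\mathfrak{c}>K$ (consistent with Remark \ref{rmk: coeff} when $\kappa$ is small), Gronwall gives $\limsup_{t\to\infty}\mathbb{E}|X_t|^2 \leq R:=K/(\mathfrak{c}-K)$, a constant depending only on the data.

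Next, I would introduce a frozen-coefficient map $\Psi$. For each $\nu\in\mathscr{P}_{2,R}(\mathbb{R}^d)$, freeze the measure argument in \eqref{CBO kappa} to obtain the linear (Markov) SDE
\begin{equation*}
    dY_t^\nu = -\lambda\bigl(Y_t^\nu - \kappa\,\mathfrak{m}_\alpha(\nu)\bigr)dt + \sigma\Bigl(\tfrac{1}{\alpha}\mathds{I}_d + D\bigl(Y_t^\nu - \kappa\,\mathfrak{m}_\alpha(\nu)\bigr)\Bigr)dB_t.
\end{equation*}
This has Lipschitz linear drift, constant target $\kappa\,\mathfrak{m}_\alpha(\nu)\in\mathbb{R}^d$, and a uniformly elliptic diffusion thanks to the $\tfrac{1}{\alpha}\mathds{I}_d$ term, so standard dissipativity-plus-ellipticity arguments yield a unique invariant probability $\Psi(\nu)$. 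Repeating the moment estimate with $\nu$ frozen and using $\nu(|\cdot|^2)\leq R$ gives $\Psi(\nu)(|\cdot|^2) \leq K(1+R)/\mathfrak{c} \leq R$ for the choice $R = K/(\mathfrak{c}-K)$, so $\Psi:\mathscr{P}_{2,R}\to\mathscr{P}_{2,R}$.

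Now I would apply Schauder in the weak topology. The set $\mathscr{P}_{2,R}$ is convex, weakly closed (lower semicontinuity of the second moment under weak convergence) and weakly compact (tightness from the uniform $L^2$ bound) inside the Polish space $\mathscr{P}(\mathbb{R}^d)$. For continuity of $\Psi$: if $\nu_n\to\nu$ weakly in $\mathscr{P}_{2,R}$, Lemma \ref{lem: useful estimates} combined with the uniform integrability from the $L^2$ bound gives $\mathfrak{m}_\alpha(\nu_n)\to\mathfrak{m}_\alpha(\nu)$; a synchronous coupling of $Y^{\nu_n}$ and $Y^\nu$ together with the dissipativity in Lemma \ref{lemsta} (first inequality) then yields $W_2(\Psi(\nu_n),\Psi(\nu))\leq C\,|\mathfrak{m}_\alpha(\nu_n)-\mathfrak{m}_\alpha(\nu)|\to 0$. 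Schauder's theorem then produces $\mu_\alpha^*\in\mathscr{P}_{2,R}$ with $\Psi(\mu_\alpha^*)=\mu_\alpha^*$. Invariance for the McKean-Vlasov system follows by uniqueness (Lemma \ref{lem: wellposed}): starting \eqref{CBO kappa} from $X_0\sim\mu_\alpha^*$, the frozen SDE with parameter $\kappa\,\mathfrak{m}_\alpha(\mu_\alpha^*)$ has invariant law $\mu_\alpha^*$, so the constant-in-time path $\mathcal{L}_t[X]\equiv\mu_\alpha^*$ solves the McKean-Vlasov equation, and uniqueness closes the loop.

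The principal obstacle is the weak continuity of $\Psi$: stability of the invariant measure of the frozen SDE under perturbation of the consensus-point parameter. Ellipticity via $\tfrac{1}{\alpha}\mathds{I}_d$ and the strict dissipativity in Lemma \ref{lemsta} are both essential here; one must also ensure the Lipschitz constant $L_{\mathfrak{m}}$ in Lemma \ref{lem: useful estimates} is uniform over $\mathscr{P}_{2,R}$, which holds since $R$ is fixed.
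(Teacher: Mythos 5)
Your argument is correct, but it takes a genuinely different route from the paper. The paper's proof is a citation: it verifies the hypotheses (H1)--(H4) of \cite{zhang2023existence}*{Theorem 2.2} --- the Lyapunov-type bound of Lemma \ref{lemsta} with the coefficient of $\nu(|\cdot|^{2})$ strictly dominated by that of $|x|^{2}$, local Lipschitz continuity and linear growth of $b$ and $\bm\sigma$, continuity of the coefficients on $\mathscr{P}_{2,R}$ (via Lemma \ref{lem: useful estimates}), and non-degeneracy of $\bm\sigma$ from the $\tfrac{1}{\alpha}\mathds{I}_{d}$ term --- and then reads off existence and the bound $R$ from that theorem's proof. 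You instead unpack the classical argument that such theorems encapsulate: freeze the measure, use the dissipativity of Lemma \ref{lemsta} (first inequality with $\mu=\nu$, so the $W_{2}$ term drops) to get via synchronous coupling a unique invariant law $\Psi(\nu)$ for the frozen SDE, show $\Psi$ preserves $\mathscr{P}_{2,R}$ by the Lyapunov estimate, and close with Schauder. What your route buys is self-containedness and transparency about where $R$ comes from; it also makes visible that $W_{2}(\Psi(\nu_{n}),\Psi(\nu))\leq C\,|\mathfrak{m}_{\alpha}(\nu_{n})-\mathfrak{m}_{\alpha}(\nu)|\leq C L_{\mathfrak{m}}\kappa\, W_{2}(\nu_{n},\nu)$, so that for $0<\kappa\ll1$ the map $\Psi$ is in fact a $W_{2}$-contraction and Banach's theorem would hand you uniqueness of the fixed point for free (the paper obtains uniqueness of the invariant measure only later, as Corollary \ref{cor:uniq}). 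What the paper's route buys is brevity and the fact that the continuity of $\Psi$ (the step you correctly identify as the principal obstacle) is delegated to the cited result.

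Two small points to tighten. First, your self-mapping condition $\mathfrak{c}>K$ is stronger than needed and, with $K$ as in Remark \ref{rmk: coeff} (a maximum that includes the additive constant $2\sigma^{2}d/\alpha^{2}$), not guaranteed by $\kappa\ll1$ alone; what you actually need is that $\mathfrak{c}$ exceed only the coefficient multiplying $\nu(|\cdot|^{2})$, which is $O(\kappa)$ --- the additive constant merely forces $R$ to be large. This is exactly the splitting $\tilde C_{1}>\tilde C_{3}$ performed in the paper's verification of (H1). Second, your final identification (fixed point of $\Psi$ $\Rightarrow$ invariant for the McKean--Vlasov dynamics) relies on uniqueness in law for \eqref{CBO kappa} started from the random initial condition $X_{0}\sim\mu_{\alpha}^{*}$; Lemma \ref{lem: wellposed} is stated for existence with deterministic data, so you should invoke the uniqueness statements of the underlying references explicitly. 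Neither point is a genuine gap.
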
 

\begin{proof}
Using Lemma \ref{lem: useful estimates} and Lemma \ref{lemsta}, the existence of an 
invariant measure becomes a consequence of \cite[Theorem 2.2]{zhang2023existence} where $R$ is defined in the latter's proof.  More details are given in the supplementary material \ref{app: prop} in the appendix. 
\end{proof}

With the invariant measure $\mu_{*}^{\alpha}$ of \eqref{CBO kappa} in hand, we can write a formal global convergence result. Indeed, let 
$\mu_{*}^{\alpha}=\mathcal{L}_{\infty}[X]$, then the following holds
\begin{equation}
\label{eq:formal conv}
    0=\frac{\rd \EE[X_\infty]}{\dt}=-\lambda (\EE[X_\infty]-\kappa\,\mathfrak{m}_{\alpha}(\mathcal{L}_{\infty}[X])) \; \Rightarrow \;  \EE[X_\infty] 
    = \kappa \,\mathfrak{m}_{\alpha}(\mu_\alpha^*)=\kappa\int_{ \RR^d }x\,\eta_\alpha^*(\dx)\,,
\end{equation}
where $\eta_\alpha^*(\dx):=\frac{\omega_\alpha^f(x)\mu_\alpha^*(\dx)}{\int_{ \RR^d }\omega_\alpha^f(x)\mu_\alpha^*(\dx)}$.
If additionally we assume for any $\epsilon>0$, there exists $C_\epsilon>0$ a constant independent of $\alpha$ such that $\mu_\alpha^*(\mc A_\epsilon)\geq C_\epsilon$ with
$\mc A_\epsilon:=\left\{x\in\RR^d:~e^{-f(x)}>e^{-f(x^*)}-\epsilon \right\}$ where $x^{*}$ is a global minimizer, then according to  \cite[Lemma A.3]{huang2024consensus} it holds that
\begin{equation}\label{lap_princ}
	\lim\limits_{\alpha\to\infty}\left(-\frac{1}{\alpha}\log\left(\int_{ \RR^d }\omega_\alpha^f(x)\mu_\alpha^*(\dx)\right)\right)= f(x^*)\,.
\end{equation}
Denoting integration by $\langle \,\cdot,\,\cdot\,\rangle$, the latter means that for the indicator function $\textbf{I}_{\{x^*\}}(\cdot)$, it holds 
\begin{equation*}
	\lim\limits_{\alpha\to\infty} \la \frac{\omega_\alpha^f(x)\mu_\alpha^*(\dx)}{\int_{ \RR^d }\omega_\alpha^f(x)\mu_\alpha^*(\dx)},\textbf{I}_{\{x^*\}}\ra=\lim\limits_{\alpha\to\infty}\la \eta_\alpha^*(\dx),\textbf{I}_{\{x^*\}}\ra=1\,.
\end{equation*}
Thus $\eta_\alpha^*$ approximates the Dirac distribution $\delta_{x^*}$ for large $\alpha\gg 1$. See some additional comments in the supplementary material \ref{app: comp} in the end of appendix.  Consequently, $\mathfrak{m}_{\alpha}(\mu_\alpha^*)$ provides a good estimate of $x^*$, which (recalling \eqref{eq:formal conv}) leads to the fact that $\EE[X_\infty]\approx \kappa\, x^*$ for sufficiently large $\alpha\gg 1$. To rigorously prove the convergence, it is necessary to verify the assumption that for any $\epsilon > 0$, there exists a constant $C_\epsilon > 0$ independent of $\alpha$ such that $\mu_\alpha^*(\mathcal{A}_\epsilon) \geq C_\epsilon$. This is an undergoing work.

Although formal, these computations can already exhibit some advantages of our model regarding the standard CBO model. Indeed, as we have mentioned  in the introduction, any Dirac measure is invariant for the standard CBO model. Whereas in our case, the use of the parameter $\kappa$ in the dynamics makes all Dirac measures not invariant. This makes possible the use of the Laplace principle in \eqref{lap_princ}. Intuitively, this creates a disconnection between the limit in the time variable ($t\to +\infty$), and the limit $\alpha\to +\infty$, hence allowing to easily handle the convergence of the dynamics towards the (to-be) global minimum. \\
Shortly after, we will show that this invariant measure $\mu_{*}^{\alpha}$ is in fact unique, which consolidates this intuition.

We shall use the classes of weight families introduced in \cite{du2023empirical}. Recalling the set $\Pi$ in \eqref{eq:weighted measures} of flows of probabilities in $[0,1]$, we define:
\begin{equation*}
\begin{aligned}
	\Pi_{1}(\varepsilon) & = \left\{ \pi \in \Pi \,:\, \limsup\limits_{t\to +\infty} \int_{0}^{1} s^{-\varepsilon} \, \pi_{t}(\text{d}s)\, < \, \frac{\mathfrak{a}}{\mathfrak{b}} \right\}, \quad \text{ where } \mathfrak{a},\mathfrak{b} \text{ are in } \eqref{dissip} \, (\text{see also Remark \ref{rmk: coeff}})\\
	\Pi_{2}(\varepsilon) & = \left\{ \pi \in \Pi \,:\,  \limsup\limits_{t\to +\infty} \int_{0}^{1} t^{\varepsilon} \wedge s^{-\varepsilon} \, \pi_{t}(\text{d}s)\, < \infty,\,  \int_{0}^{1}\int_{0}^{1} t^{\varepsilon}\wedge |s_{1} - s_{2}|^{-\varepsilon} \, \pi_{t}(\text{d}s_{1})\pi_{t}(\text{d}s_{2})\, <\infty\,\right\}.
\end{aligned}
\end{equation*}
These weight classes arise in numerous applications; see \cite[Example 2.1]{du2023empirical} or \ref{app: ex} in appendix.

\begin{theorem}\label{mainthm}
    Let $X_{\cdot}$ be the process in \eqref{CBO pi} with $\varpi\in\Pi_1(\varepsilon_1)$, and $Y_{\cdot}$ be the process in \eqref{CBOself pi}  with $\pi\in \Pi_1(\varepsilon_1)\cap \Pi_2(\varepsilon_2)$, where $\varepsilon_1,\varepsilon_2\in (0,1]$.  Let $\vartheta \in \Pi_{2}(\varepsilon_{2})$. 
    Assume  that $\mu^{*}_{\alpha} \in \mathscr{P}_{2,R}(\mathbb{R}^{d})$ is an invariant measure of \eqref{CBO kappa}. Further assume that $\lambda > 8\sigma^{2}$ and $0<\kappa\ll 1$. 
    Then there exists a constant $C$ independent of $t$ such that
	\begin{equation}\label{thmeq}
		\EE\left[ W_2^2(\mE_t^\vartheta[Y], \mu_\alpha^*) \right] +\EE  \left[ W_2^2 ( \mathcal{E}_{t}^{\vartheta}[X] ,\mu^*_{\alpha}) \right]  \leq C \, t^{-\varepsilon}\,, \quad \text{ where }  \; \varepsilon=\varepsilon_1\wedge \frac{1}{3(d+2)}\varepsilon_2.
	\end{equation}
\end{theorem}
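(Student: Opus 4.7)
The plan is to compare both $X$ and $Y$ to a stationary reference process $Z$ solving \eqref{CBO kappa} with $Z_{0}\sim \mu_{\alpha}^{*}$ (so $\mathcal{L}_{t}[Z]\equiv \mu_{\alpha}^{*}$), and to split the total error in \eqref{thmeq} into (i) a trajectorial piece produced by the dissipativity \eqref{dissip} plus a delay-type Gr\"onwall whose polynomial decay rate is governed by $\Pi_{1}(\varepsilon_{1})$, and (ii) an empirical-measure fluctuation of the $\vartheta$- (respectively $\pi$-)occupation of the \emph{stationary} process $Z$ around $\mu_{\alpha}^{*}$, whose rate is the Fournier--Guillin exponent $\varepsilon_{2}/(3(d+2))$, with the two conditions of $\Pi_{2}(\varepsilon_{2})$ supplying the required single-time and double-time controls.

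First I would drive $X$ from \eqref{CBO pi} and $Z$ by the same Brownian motion. It\^o's formula on $|X_{t}-Z_{t}|^{2}$, together with \eqref{dissip} and the convexity bound $W_{2}^{2}(\mathcal{L}_{t}^{\varpi}[X],\mu_{\alpha}^{*})\leq \int_{0}^{1}\EE[|X_{st}-Z_{st}|^{2}]\varpi_{t}(\rd s)$, yields
\begin{equation*}
\frac{\rd}{\rd t}\EE[|X_{t}-Z_{t}|^{2}]\leq -\mathfrak{a}\,\EE[|X_{t}-Z_{t}|^{2}]+\mathfrak{b}\int_{0}^{1}\EE[|X_{st}-Z_{st}|^{2}]\varpi_{t}(\rd s).
\end{equation*}
The ansatz $u(t)=Ct^{-\varepsilon_{1}}$ is a super-solution for $t$ large, precisely because $\varpi\in\Pi_{1}(\varepsilon_{1})$ forces $\mathfrak{b}\limsup_{t}\int s^{-\varepsilon_{1}}\varpi_{t}(\rd s)<\mathfrak{a}$; hence $\EE[|X_{t}-Z_{t}|^{2}]\leq Ct^{-\varepsilon_{1}}$. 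Repeating the coupling for $Y$ from \eqref{CBOself pi} and applying the triangle split $W_{2}^{2}(\mE_{t}^{\pi}[Y],\mu_{\alpha}^{*})\leq 2W_{2}^{2}(\mE_{t}^{\pi}[Y],\mE_{t}^{\pi}[Z])+2W_{2}^{2}(\mE_{t}^{\pi}[Z],\mu_{\alpha}^{*})$ produces
\begin{equation*}
\frac{\rd}{\rd t}\EE[|Y_{t}-Z_{t}|^{2}]\leq -\mathfrak{a}\,\EE[|Y_{t}-Z_{t}|^{2}]+2\mathfrak{b}\int_{0}^{1}\EE[|Y_{st}-Z_{st}|^{2}]\pi_{t}(\rd s)+2\mathfrak{b}\,\EE[W_{2}^{2}(\mE_{t}^{\pi}[Z],\mu_{\alpha}^{*})].
\end{equation*}
The last term is the empirical fluctuation of the stationary process $Z$: Fournier--Guillin applied pathwise to the random measure $\mE_{t}^{\pi}[Z]$, combined with both conditions of $\Pi_{2}(\varepsilon_{2})$ to control moments and the pairwise time correlations $\EE[|Z_{s_{1}t}-Z_{s_{2}t}|^{2}]$, gives $\EE[W_{2}^{2}(\mE_{t}^{\pi}[Z],\mu_{\alpha}^{*})]\leq Ct^{-\varepsilon_{2}/(3(d+2))}$. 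A polynomial Gr\"onwall with $\pi\in\Pi_{1}(\varepsilon_{1})$ then produces $\EE[|Y_{t}-Z_{t}|^{2}]\leq Ct^{-\varepsilon}$ with $\varepsilon=\varepsilon_{1}\wedge \frac{1}{3(d+2)}\varepsilon_{2}$.

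To conclude \eqref{thmeq}, I would split $\EE[W_{2}^{2}(\mE_{t}^{\vartheta}[\bullet],\mu_{\alpha}^{*})]$ for $\bullet\in\{X,Y\}$ as $\leq 2\int_{0}^{1}\EE[|\bullet_{st}-Z_{st}|^{2}]\vartheta_{t}(\rd s)+2\,\EE[W_{2}^{2}(\mE_{t}^{\vartheta}[Z],\mu_{\alpha}^{*})]$. Using the uniform bound $\EE[|\bullet_{\tau}-Z_{\tau}|^{2}]\leq C(1\wedge \tau^{-\varepsilon})$ coming from the previous step combined with moment estimates, the first summand is $\leq Ct^{-\varepsilon}\int_{0}^{1}(t^{\varepsilon}\wedge s^{-\varepsilon})\vartheta_{t}(\rd s)\leq Ct^{-\varepsilon}$ by $\vartheta\in\Pi_{2}(\varepsilon_{2})$, while the second is controlled by the same Fournier--Guillin estimate applied now with weight $\vartheta$. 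The main obstacle will be closing the delay Gr\"onwall for $Y$ in the presence of the inhomogeneous source $t^{-\varepsilon_{2}/(3(d+2))}$: one must bootstrap the polynomial decay of $\EE[|Y_{t}-Z_{t}|^{2}]$ using exactly the sharp threshold $\mathfrak{b}\limsup_{t}\int s^{-\varepsilon_{1}}\pi_{t}(\rd s)<\mathfrak{a}$ built into $\Pi_{1}(\varepsilon_{1})$, without degrading the final exponent $\varepsilon$.
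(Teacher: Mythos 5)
Your proposal is correct and follows essentially the same route as the paper: couple to a frozen-coefficient reference process whose law relaxes to (or, in your variant, starts at) $\mu_\alpha^*$, apply the dissipativity of Lemma \ref{lemsta} together with a delayed Gr\"onwall inequality whose polynomial rate is governed by $\Pi_1(\varepsilon_1)$, and invoke the Fournier--Guillin-type occupation-measure concentration of Proposition \ref{prop} (i.e.\ \cite[Proposition 3.1]{du2023empirical}) to produce the exponent $\varepsilon_2/(3(d+2))$. The only real difference is that your stationary initialization $Z_0\sim\mu_\alpha^*$ makes the term $W_2^2(\mathcal{L}_t^{\varpi}[Z],\mu_\alpha^*)$ vanish identically, whereas the paper starts $\hY$ from the same deterministic point as $X$ and $Y$ and controls that term through the exponential ergodicity estimate of \cite[Lemma 3.2]{du2023empirical}.
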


\begin{remark}
    Example of weights $\pi,\varpi, \vartheta$ in Theorem \ref{mainthm} are: $\pi,\vartheta$ are Lebesgue and $\varpi = \delta_{1}$ is Dirac. Then $X_{\cdot}, Y_{\cdot}$ in the theorem become \eqref{CBO kappa} and \eqref{CBOself'} respectively,  $\mathcal{E}_{t}^{\vartheta}[\cdot]$ in \eqref{thmeq} becomes $\mathcal{E}_{t}[\cdot]$, and $\varepsilon_{1}<1-\mathfrak{b}/\mathfrak{a}$ and $\varepsilon_{2}<1$. This situation corresponds to the CBO with Personal Best in \cite[Equation (2.4)]{totzeck2020consensus}.
\end{remark}

To prove the theorem, let us first consider the following Markovian SDE  
\begin{align}\label{CBOh}
    \rd \hY_t&=-\lambda(\hY_t-\kappa\,\mathfrak{m}_\alpha(\mu_\alpha^*))\dt+\sigma\left( \frac{1}{\alpha} \mathds{I}_{d} + D(\hY_t-\kappa\,\mathfrak{m}_\alpha(\mu_\alpha^*))\right)\rd B_t.
\end{align} 
The following estimate will be useful and is a direct application of \cite[Proposition 3.1]{du2023empirical}. We recall $\delta>0$ is the one in \eqref{lemeq}, and is chosen to be equal to $1$ in Theorem \ref{mainthm}.
\begin{proposition}\label{prop}
    Let $\vartheta\in \Pi_2(\varepsilon_2)$ with $\varepsilon_2\in(0,1)$, and assume $\hY_{\cdot}$ satisfy \eqref{CBOh}. Then it holds
    \begin{equation} 
        \EE  \left[ W_2^2(\mE_t^\vartheta[\hY],\mu_\alpha^*) \right] \leq Ct^{-\gamma\,\varepsilon_2}\,, \quad \text{ where } \; 
        \gamma=\frac{\delta}{(d+2)(\delta+2)} 
    \end{equation}
\end{proposition}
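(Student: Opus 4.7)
The plan is to apply \cite[Proposition 3.1]{du2023empirical} directly to the Markovian process $\hY_{\cdot}$, so the essential work is to verify that all hypotheses of that result are satisfied in our setting, with the correct identification of the constants appearing in the rate.

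First, I would observe that \eqref{CBOh} is a genuine time-homogeneous Markovian SDE with coefficients $b(x,\mu_{\alpha}^{*})$ and $\bm\sigma(x,\mu_{\alpha}^{*})$ in which the measure argument is frozen at $\mu_{\alpha}^{*}$. In particular, since $\mu_{\alpha}^{*}$ is by hypothesis an invariant measure of the McKean--Vlasov SDE \eqref{CBO kappa}, a solution $X_{\cdot}$ of \eqref{CBO kappa} started from $\mathcal{L}_{0}[X]=\mu_{\alpha}^{*}$ satisfies $\mathcal{L}_{t}[X]=\mu_{\alpha}^{*}$ for all $t\geq 0$, and therefore solves the frozen equation \eqref{CBOh}. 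Hence $\mu_{\alpha}^{*}$ is an invariant measure for $\hY_{\cdot}$ as well.

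Next, I would check the two ingredients that \cite[Proposition 3.1]{du2023empirical} requires for a quantitative ergodicity estimate on the weighted occupation measure. Specialising Lemma \ref{lemsta} to $\mu=\nu=\mu_{\alpha}^{*}\in\mathscr{P}_{2,R}(\mathbb{R}^{d})$, the $W_{2}$-term on the right-hand side of \eqref{dissip} vanishes and we obtain a strict pathwise contraction
\begin{equation*}
 2\la b(x,\mu_{\alpha}^{*})-b(y,\mu_{\alpha}^{*}),x-y\ra + \|\bm\sigma(x,\mu_{\alpha}^{*})-\bm\sigma(y,\mu_{\alpha}^{*})\|^{2}\leq -\mathfrak{a}|x-y|^{2},
\end{equation*}
with $\mathfrak{a}>0$ under the regime $\lambda>8\sigma^{2}$, $0<\kappa\ll 1$. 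This dissipativity yields exponential ergodicity of $\hY$ in $W_{2}$, and in particular uniqueness of its invariant measure (so it must coincide with $\mu_{\alpha}^{*}$). The second ingredient is the Lyapunov-type bound \eqref{lemeq}, which applied with $\nu=\mu_{\alpha}^{*}\in\mathscr{P}_{2,R}$ and a given $\delta>0$ yields
\begin{equation*}
 2\la b(x,\mu_{\alpha}^{*}),x\ra + (1+\delta)\|\bm\sigma(x,\mu_{\alpha}^{*})\|^{2} \leq -\mathfrak{c}|x|^{2} + K(1+\delta + R).
\end{equation*}
Together with $Y_{0}=x\in\mathbb{R}^{d}$ deterministic, this gives uniform-in-time bounds on the $(2+\delta)$-th moment of $\hY_{t}$ via Itô's formula and Gronwall; these are exactly the moment controls required by the cited reference.

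With the dissipativity and the moment bound in hand, and with $\vartheta\in\Pi_{2}(\varepsilon_{2})$, $\varepsilon_{2}\in(0,1)$, the result of \cite[Proposition 3.1]{du2023empirical} applies verbatim to $\hY_{\cdot}$ and its unique invariant measure $\mu_{\alpha}^{*}$, delivering
\begin{equation*}
 \EE\bigl[W_{2}^{2}(\mathcal{E}_{t}^{\vartheta}[\hY],\mu_{\alpha}^{*})\bigr] \leq C\,t^{-\gamma\varepsilon_{2}}, \qquad \gamma=\frac{\delta}{(d+2)(\delta+2)}.
\end{equation*}
The main point requiring care is the book-keeping: one has to match our constants $(\mathfrak{a},\mathfrak{c},K,R)$ and the Lyapunov exponent $\delta$ to those appearing in the hypotheses of \cite[Proposition 3.1]{du2023empirical}, and in particular verify that the class $\Pi_{2}(\varepsilon_{2})$ used there coincides with the one defined above so that the exponent $\gamma\varepsilon_{2}$ on the right-hand side comes out in the stated form. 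The dimension-dependent factor $1/(d+2)$ reflects a Fournier--Guillin type convergence rate for empirical measures in $W_{2}$, and I would not expect to improve it without extra structure.
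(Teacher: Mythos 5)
Your proposal is correct and follows the same route as the paper, which likewise obtains Proposition \ref{prop} as a direct application of \cite[Proposition 3.1]{du2023empirical} to the frozen Markovian equation \eqref{CBOh}; your additional verifications (that $\mu_\alpha^*$ is invariant for the frozen dynamics, and that Lemma \ref{lemsta} with $\mu=\nu=\mu_\alpha^*$ supplies the dissipativity and Lyapunov hypotheses of the cited result) are exactly the checks the paper leaves implicit.
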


\begin{proof}[Proof of Theorem \ref{mainthm}]
Let $\hY$ be as in \eqref{CBOh}. Given the result in Proposition \ref{prop}, it is then sufficient to estimate 	$\EE \left[W_2^2(\mE_t^\vartheta[Y],\mE_t^\vartheta[\hY])\right]$ and $\EE\left[W_2^2(\mE_t^\vartheta[X],\mE_t^\vartheta[\hY])\right]$. 

Let $Y$ and $\hY$ be solutions to  \eqref{CBOself pi} and \eqref{CBOh} respectively with the same deterministic initial data. Let us first estimate $\EE[|Y_t-\hY_t|^2]$ by using Ito's formula and Lemma \ref{lemsta}. Indeed it is easy to get 
\begin{align*}
    &\frac{\rd}{\dt}\EE[|Y_t-\hY_t|^2]\leq -\mathfrak{a} \, \EE[|Y_t-\hY_t|^2]+\mathfrak{b} \, \EE\left[ W_2^2(\mE_t^\pi[Y],\mu_\alpha^*)\right]\\
    & \quad \quad \leq   -\mathfrak{a} \, \EE[|Y_t-\hY_t|^2]+2\mathfrak{b} \, \EE\left[ W_2^2(\mE_t^\pi[Y],\mE_t^\pi[\hY])\right]+2\mathfrak{b} \, \EE\left[ W_2^2(\mE_t^\pi[\hY],\mu_\alpha^*)\right]\\
    & \quad \quad \leq  -\mathfrak{a} \, \EE[|Y_t-\hY_t|^2]+2\mathfrak{b}  \int_0^1\EE\left[|Y_{ts}-\hY_{ts}|^2\right]\pi_t(\rd s)+2\mathfrak{b} \, \EE[ W_2^2(\mE_t^\pi[\hY],\mu_\alpha^*)]\,,
\end{align*}
where $2\mathfrak{b}<\mathfrak{a}$. 
Since $\pi\in \Pi_2(\varepsilon_2)$, Proposition \ref{prop} (with $\pi$ instead of $\vartheta$) implies \; $\EE[ W_2^2(\mE_t^\pi[\hY],\mu_\alpha^*)]\leq Ct^{-\gamma \varepsilon_2}\,$.

Now let  $g(t):=\EE[|Y_t-\hY_t|^2]$, then it holds 
\begin{equation*}
    g'(t)\leq -\mathfrak{a} \, g(t)+2\mathfrak{b} \, \int_0^1g(ts)\pi_t(\ds)+C \, t^{-\gamma \varepsilon_2}.
\end{equation*}
By the Gronwall-type inequality  \cite[Lemma 4.1]{du2023empirical}, this leads to \; $g(t)=\EE[|Y_t-\hY_t|^2]\leq C t^{-(\varepsilon_1\wedge \gamma \varepsilon_2)}\,$, 
then 
\begin{align}\label{310}
\EE\left[W_2(\mE_t^\vartheta[Y],\mE_t^\vartheta[\hY])^2\right]\leq \int_0^1\EE\left[|Y_{ts}-\hY_{ts}|^2\right]\vartheta_t(\ds)\leq C\int_0^1(ts)^{-\varepsilon}\vartheta_t(\ds)\,,
\end{align}
where $\varepsilon=\varepsilon_1\wedge \gamma \varepsilon_2$. When $t$ is sufficiently large, one has $(ts)^{-\varepsilon}=1\wedge (ts)^{-\varepsilon}$ for all $s\in(0,1)$. Since $\vartheta\in \Pi_2(\varepsilon_2)$, the inequality \eqref{310} implies
\begin{align}\label{37}
    \EE\left[W_2(\mE_t^\vartheta[Y],\mE_t^\vartheta[\hY])^2\right]\leq C \, t^{-\varepsilon}\int_0^1t^{\varepsilon}\wedge s^{-\varepsilon}\vartheta_t(\ds)\leq C \, t^{-\varepsilon}\int_0^1t^{\varepsilon_2}\wedge s^{-\varepsilon_2}\vartheta_t(\ds)\leq C \, t^{-\varepsilon}\,.
\end{align}
The latter with the estimate of $\EE \left[W_2^2(\mE_t^\vartheta[\hY],\mu_\alpha^*)\right]$ in Proposition \ref{prop} yield  $\EE\left[ W_2^2(\mE_t^\vartheta[Y], \mu_\alpha^*) \right] \leq C \, t^{-\varepsilon}$. 

Similarly, one also has
\begin{align*}
    \frac{\rd}{\dt}\EE[|X_t-\hY_t|^2]
    \leq -\mathfrak{a} \, \EE[|X_t-\hY_t|^2]+2\mathfrak{b}\int_0^1\EE\left[|X_{ts}-\hY_{ts}|^2\right]\varpi_t(\rd s)+2\mathfrak{b} \, \EE\left[ W_2^2(\mc L_t^\varpi[\hY],\mu_\alpha^*)\right]\,.
\end{align*}
Then by \cite[Lemma 3.2]{du2023empirical} one has
\begin{align*}
    \EE\left[ W_2^2(\mc L_t^\varpi[\hY],\mu_\alpha^*)\right]\leq \int_0^1\EE\left[ W_2^2(\mc L_{ts}^\varpi[\hY],\mu_\alpha^*)\right]\varpi_t(\ds)\leq C\int_0^1e^{-\mathfrak{a} ts}\varpi_t(\ds)\leq Ct^{-\varepsilon_1}.
\end{align*}
This leads to  $\EE[|X_t-\hY_t|^2]\leq Ct^{-\varepsilon_1}$, which further implies  $\EE[W_2^2(\mE_t^\vartheta[X],\mE_t^\vartheta[\hY])]\leq Ct^{-\varepsilon}$ as in \eqref{310}-\eqref{37}.  Combining with Proposition \ref{prop}, we prove that $\EE[ W_2^2(\mE_t^\vartheta[X], \mu_\alpha^*) ] \leq Ct^{-\varepsilon}$. Therefore \eqref{thmeq} follows.
\end{proof}

The above theorem together with Proposition \ref{prop:existence of inv meas} imply that  $\mu^{*}_{\alpha}$ is the unique invariant measure of \eqref{CBO kappa}.
\begin{corollary}\label{cor:uniq}
	The process \eqref{CBO kappa} has a unique invariant measure.
\end{corollary}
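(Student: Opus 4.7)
The plan is to specialize Theorem \ref{mainthm} so that the process $X_{\cdot}$ coincides with \eqref{CBO kappa}, then identify the limit of the occupation measure with any candidate invariant measure via stationarity. Concretely, taking $\varpi=\delta_{1}$ makes \eqref{CBO pi} reduce to \eqref{CBO kappa}, and this choice belongs to $\Pi_{1}(\varepsilon_{1})$ for every $\varepsilon_{1}\in(0,1]$ since $\int_{0}^{1}s^{-\varepsilon_{1}}\,\delta_{1}(\ds)=1<\mathfrak{a}/\mathfrak{b}$ by Lemma \ref{lemsta}. Taking $\vartheta$ equal to Lebesgue measure on $[0,1]$, which lies in $\Pi_{2}(\varepsilon_{2})$ for small $\varepsilon_{2}$, turns $\mE_{t}^{\vartheta}[X]$ into the usual time-average occupation measure $\mE_{t}[X]$. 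Proposition \ref{prop:existence of inv meas} already supplies one invariant measure $\mu_{\alpha}^{*}\in\mathscr{P}_{2,R}$, so it remains to show that an arbitrary invariant measure $\nu$ of \eqref{CBO kappa} must coincide with it.

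Before invoking the theorem, I would first derive an a priori bound on the second moment of $\nu$. Starting $X$ from $X_{0}\sim\nu$, invariance gives $\mathcal{L}_{t}[X]=\nu$ for every $t\geq 0$, hence $\EE[|X_{t}|^{2}]=\nu(|\cdot|^{2})$ is constant in $t$. Applying Itô's formula to $|X_{t}|^{2}$ and using inequality \eqref{lemeq}, this leads to
\[
0 \,=\, \frac{\rd}{\dt}\EE[|X_{t}|^{2}] \,\leq\, -\mathfrak{c}\,\nu(|\cdot|^{2})+K\bigl(1+\delta+\nu(|\cdot|^{2})\bigr),
\]
which, under the smallness of $\kappa$ and largeness of $\alpha$ from Remark \ref{rmk: coeff}, bounds $\nu(|\cdot|^{2})$ by a constant $R'$ depending only on the data of the problem. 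Hence $\nu\in\mathscr{P}_{2,R'}$ automatically.

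Next, I would apply Theorem \ref{mainthm} to the process $X$ solving \eqref{CBO kappa} with the random initial condition $X_{0}\sim\nu$. The argument in the proof of the theorem carries over essentially unchanged: one couples the auxiliary Markovian process $\hY$ of \eqref{CBOh} so that $\hY_{0}=X_{0}$, so $|X_{0}-\hY_{0}|^{2}=0$ and the Gronwall-type step is unaffected; the constant in Proposition \ref{prop} depends only on $\EE[|\hY_{0}|^{2}]=\nu(|\cdot|^{2})$, controlled by the previous paragraph. This produces
\[
\EE\bigl[W_{2}^{2}(\mE_{t}[X],\mu_{\alpha}^{*})\bigr] \;\xrightarrow[t\to\infty]{}\; 0.
\]

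Finally, I would use invariance in the opposite direction to identify the limit. For any bounded $1$-Lipschitz $\varphi$, Fubini and the stationarity $\mathcal{L}_{r}[X]=\nu$ yield
\[
\EE\Bigl[\int\varphi\,\rd\mE_{t}[X]\Bigr] \,=\, \frac{1}{t}\int_{0}^{t}\EE[\varphi(X_{r})]\,\rd r \,=\, \int\varphi\,\rd\nu,
\]
while Kantorovich-Rubinstein duality and Cauchy-Schwarz give
\[
\Bigl|\EE\Bigl[\int\varphi\,\rd\mE_{t}[X]\Bigr]-\int\varphi\,\rd\mu_{\alpha}^{*}\Bigr| \,\leq\, \|\varphi\|_{\mathrm{Lip}}\,\bigl(\EE[W_{2}^{2}(\mE_{t}[X],\mu_{\alpha}^{*})]\bigr)^{1/2} \,\longrightarrow\, 0.
\]
Consequently $\int\varphi\,\rd\nu=\int\varphi\,\rd\mu_{\alpha}^{*}$ for all such $\varphi$, forcing $\nu=\mu_{\alpha}^{*}$. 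The main point requiring care is the passage from deterministic to random initial data in Theorem \ref{mainthm}: the underlying estimates are already contained in the paper but stated for $X_{0}=x$, so one must check that the constants remain uniformly bounded over initial laws with bounded second moment.
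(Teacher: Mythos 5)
Your argument is correct, but it follows a genuinely different route from the paper's. The paper's proof is a two-line triangle inequality built on the \emph{self-interacting} process: if $\mu^*_\alpha$ and $\tilde\mu^*_\alpha$ are both invariant, Theorem \ref{mainthm} applies to the single occupation measure $\mE_t^\vartheta[Y]$ twice, giving $W_2^2(\mu^*_\alpha,\tilde\mu^*_\alpha)\leq 2\,\EE[W_2^2(\mE_t^\vartheta[Y],\mu^*_\alpha)]+2\,\EE[W_2^2(\mE_t^\vartheta[Y],\tilde\mu^*_\alpha)]\to 0$; since the same process $Y$ appears in both terms, no new estimate is needed. You instead run the \emph{mean-field} process \eqref{CBO kappa} from the candidate invariant law $\nu$, exploit stationarity to identify $\EE[\mE_t[X]]$ with $\nu$, and use the theorem's convergence of $\mE_t[X]$ to $\mu^*_\alpha$ to force $\nu=\mu^*_\alpha$. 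Each approach buys something: yours supplies the a priori Lyapunov bound via \eqref{lemeq} showing that \emph{every} invariant measure automatically lies in some $\mathscr{P}_{2,R'}$ with $R'$ depending only on the data --- a point the paper's proof silently needs as well, since Theorem \ref{mainthm} is only stated for invariant measures in $\mathscr{P}_{2,R}$ (and the constant $L_{\mathfrak{m}}$, hence $\mathfrak{a},\mathfrak{b}$, depends on $R$); without such a bound the corollary would only give uniqueness within $\mathscr{P}_{2,R}$. The price you pay is the extension of Theorem \ref{mainthm} and Proposition \ref{prop} to random initial data, which you flag but do not carry out; it does go through, because with $X_0\sim\nu$ invariant one has $\mathcal{L}_t[X]\equiv\nu$, so the coupling inequality against \eqref{CBOh} reads $g'(t)\leq-(\mathfrak{a}-2\mathfrak{b})g(t)+2\mathfrak{b}\,W_2^2(\mathcal{L}_t[\hY],\mu^*_\alpha)$ with an exponentially decaying forcing term (the relevant constant depending only on $W_2^2(\nu,\mu^*_\alpha)<\infty$), and $\mathfrak{a}>2\mathfrak{b}$ closes the Gronwall step. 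Two small technical remarks: the moment bound from \eqref{lemeq} requires the coefficient of $\nu(|\cdot|^2)$ on the right (which is $O(\kappa)$ by Remark \ref{rmk: coeff}, not the full $K$) to be strictly below $\mathfrak{c}$, so your ``$\kappa$ small'' caveat is the right one; and in the last step you could replace the Kantorovich--Rubinstein argument by joint convexity of $W_2^2$, namely $W_2^2(\nu,\mu^*_\alpha)=W_2^2(\EE[\mE_t[X]],\mu^*_\alpha)\leq\EE[W_2^2(\mE_t[X],\mu^*_\alpha)]\to 0$.
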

\begin{proof}
Assume $\mu^*_\alpha$ and $\tilde \mu^*_\alpha$ are two invariant measures for \eqref{CBO kappa}. Then Theorem \ref{mainthm} concludes that \\
$W_2^2(\mu^*_\alpha,\tilde \mu^*_\alpha)\leq  	2\EE\left[ W_2^2(\mE_t^\vartheta[Y], \mu_\alpha^*) \right] +2	\EE\left[ W_2^2(\mE_t^\vartheta[Y],\tilde  \mu_\alpha^*) \right] \to 0 \mbox{ as }t\to \infty\,.$
\end{proof}

\begin{remark}
In the standard CBO model \eqref{CBO},  every Dirac measure is in fact an invariant measure. So there cannot be any guarantees on uniqueness of its long-time limit. It is only when $\alpha\to +\infty$ that one reaches  a global minimizer.  In our situation, both  \eqref{CBO kappa} and its self-interacting version \eqref{CBOself'} have the same unique invariant measure to which they converge in their long-time limit, for any fixed (finite) $\alpha$. 
\end{remark}

\begin{remark}
As a byproduct of our analysis, we showed that
the process \eqref{CBO kappa} is an example of a McKean-Vlasov SDE which has a unique invariant measure although it does not satisfy the usual dissipativity condition (also referred to as the confluence assumption). Indeed, as it can be observed from Lemma \ref{lemsta}, we do not require \eqref{dissip} to hold for all measures in $\mathscr{P}_{2}(\mathbb{R}^{d})\times \mathscr{P}_{2}(\mathbb{R}^{d})$, but rather only on the subset $\mathscr{P}_{2,R}(\RR^d)\times \mathscr{P}_{2}(\RR^d)$. We also obtain a polynomial rate of convergence in the sense of \eqref{thmeq}. 
In the former case, existence, uniqueness, and exponential convergence to the invariant measure hold true (see \cite{wang2018distribution, du2023empirical} and references therein). 
\end{remark}

Finally, one can also consider multi-self-interacting particles as it has been done in \cite[Theorem 2.3]{du2023empirical}. 

\begin{theorem}
Let $\pi \in \Pi_{1}(\varepsilon_{1})\cap \Pi_{2}(\varepsilon_{2})$ and $\vartheta \in \Pi(\varepsilon_{2})$ with $\varepsilon_{1},\varepsilon_{2}\in (0,1]$, and $Y^{i}_{\cdot}, i=1,\dots, N$ satisfy
\begin{equation}
\begin{aligned}
    & \rd Y^{i}_t = - \lambda ( Y^{i}_t -\kappa \,\mathfrak{m}_\alpha(\Upsilon^{N,\pi}_{t}[\mathbf{Y}]) \dt + \sigma \left(\frac{1}{\alpha}\mathds{I}_{d} + D(Y^{i}_t - \kappa\, \mathfrak{m}_\alpha(\Upsilon^{N,\pi}_{t}[\mathbf{Y}]) \right) \rd B^{i}_t\,,\\ 
    & \text{ where } \Upsilon^{N,\pi}_{t}[\mathbf{Y}] := \frac{1}{N}\sum\limits_{j=1}^{N} \mathcal{E}_{t}^{\pi}[Y^{j}],\quad \text{ and }\; \mathbf{Y}_{\cdot} := (Y^{1}_{\cdot},\dots,Y^{N}_{\cdot}).
\end{aligned}
\end{equation}
Here, $B^{i}_{\cdot}$ are $N$ independent Brownian motions, and $\mathbf{Y}_{0} = \mathbf{y}:=(y^{1},\dots,y^{N}) \in \mathbb{R}^{d\times N}$ is the initial condition. Then for each $\mathbf{y}$, there is a constant $C>0$ independent of $t$ and $N$ such that 
\begin{equation*}
    \mathbb{E}^{\mathbf{y}}\left[
    W_{2}^{2}(
     \Upsilon^{N,\vartheta}_{t}[\mathbf{Y}] , \mu^{*}_{\alpha}
    )\right]
    \leq C\, t^{-(\varepsilon_{1}\wedge \gamma \varepsilon_{2})}\, N^{-\gamma} + C\, t^{-(\varepsilon_{1}\wedge \varepsilon_{2})},\quad \text{ and } \gamma = \frac{1}{3(d+2)}.
\end{equation*}
\end{theorem}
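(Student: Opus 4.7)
The plan is to adapt the proof of Theorem \ref{mainthm} to the $N$-particle setting, now coupling each $Y^{i}_{\cdot}$ with an independent auxiliary Markovian process $\widehat{Y}^{i}_{\cdot}$ solving \eqref{CBOh} (with $\mathfrak{m}_\alpha(\mu^{*}_{\alpha})$ frozen), driven by the same Brownian motion $B^{i}_{\cdot}$ and starting from $\widehat{Y}^{i}_{0}=y^{i}$. By construction the $\widehat{Y}^{i}_{\cdot}$ are $N$ independent copies of the same ergodic Markov process with limiting law $\mu^{*}_{\alpha}$, and the triangle inequality gives
\begin{equation*}
\EE^{\mathbf{y}}\bigl[W_{2}^{2}(\Upsilon^{N,\vartheta}_{t}[\mathbf{Y}],\mu^{*}_{\alpha})\bigr] \leq 2\,\EE\bigl[W_{2}^{2}(\Upsilon^{N,\vartheta}_{t}[\mathbf{Y}],\Upsilon^{N,\vartheta}_{t}[\widehat{\mathbf{Y}}])\bigr] + 2\,\EE\bigl[W_{2}^{2}(\Upsilon^{N,\vartheta}_{t}[\widehat{\mathbf{Y}}],\mu^{*}_{\alpha})\bigr],
\end{equation*}
reducing the proof to a coupling estimate and a multi-sample empirical-measure estimate.

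For the coupling estimate, Itô's formula applied to $|Y^{i}_{t}-\widehat{Y}^{i}_{t}|^{2}$ together with the dissipativity bound \eqref{dissip} yields
\begin{equation*}
\frac{\rd}{\dt}\EE\bigl[|Y^{i}_{t}-\widehat{Y}^{i}_{t}|^{2}\bigr] \leq -\mathfrak{a}\,\EE\bigl[|Y^{i}_{t}-\widehat{Y}^{i}_{t}|^{2}\bigr] + \mathfrak{b}\,\EE\bigl[W_{2}^{2}(\Upsilon^{N,\pi}_{t}[\mathbf{Y}],\mu^{*}_{\alpha})\bigr].
\end{equation*}
Setting $g(t):=N^{-1}\sum_{i=1}^{N}\EE[|Y^{i}_{t}-\widehat{Y}^{i}_{t}|^{2}]$, averaging over $i$, and using convexity of $W_{2}^{2}$ via $W_{2}^{2}(\Upsilon^{N,\pi}_{t}[\mathbf{Y}],\Upsilon^{N,\pi}_{t}[\widehat{\mathbf{Y}}])\leq N^{-1}\sum_{i}\int_{0}^{1}|Y^{i}_{ts}-\widehat{Y}^{i}_{ts}|^{2}\,\pi_{t}(\rd s)$ gives
\begin{equation*}
g'(t)\leq -\mathfrak{a}\,g(t) + 2\mathfrak{b}\int_{0}^{1}g(ts)\,\pi_{t}(\rd s) + 2\mathfrak{b}\,\EE\bigl[W_{2}^{2}(\Upsilon^{N,\pi}_{t}[\widehat{\mathbf{Y}}],\mu^{*}_{\alpha})\bigr].
\end{equation*}

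For the empirical-measure estimate, I would upgrade Proposition \ref{prop} to the multi-sample setting, exploiting that $\Upsilon^{N,\pi}_{t}[\widehat{\mathbf{Y}}]$ is the average of $N$ independent time-averaged empirical measures. Following the strategy of \cite[Theorem 2.3]{du2023empirical}, this should yield a bound of the form $\EE[W_{2}^{2}(\Upsilon^{N,\pi}_{t}[\widehat{\mathbf{Y}}],\mu^{*}_{\alpha})] \leq Ct^{-\gamma\varepsilon_{2}}N^{-\gamma}+Ct^{-(\varepsilon_{1}\wedge\varepsilon_{2})}$ with $\gamma=1/(3(d+2))$, where the first term comes from a Fournier--Guillin $W_{2}$-concentration bound for $N$ samples in dimension $d$ (applied using the uniform-in-time moment control that follows from \eqref{lemeq}) and the second from the ergodic convergence of each $\widehat{Y}^{i}_{\cdot}$ combined with $\pi\in\Pi_{1}(\varepsilon_{1})\cap\Pi_{2}(\varepsilon_{2})$. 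Plugging this into the differential inequality for $g$ and invoking the Gronwall-type lemma \cite[Lemma 4.1]{du2023empirical} produces $g(t)\leq Ct^{-(\varepsilon_{1}\wedge\gamma\varepsilon_{2})}N^{-\gamma}+Ct^{-(\varepsilon_{1}\wedge\varepsilon_{2})}$. Finally, $\EE[W_{2}^{2}(\Upsilon^{N,\vartheta}_{t}[\mathbf{Y}],\Upsilon^{N,\vartheta}_{t}[\widehat{\mathbf{Y}}])]\leq\int_{0}^{1}g(ts)\vartheta_{t}(\rd s)$ together with $\vartheta\in\Pi_{2}(\varepsilon_{2})$ allows absorbing the integral exactly as in \eqref{310}-\eqref{37}, which combined with the empirical-measure bound yields the announced inequality.

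The main obstacle is the multi-sample empirical-measure estimate: establishing the precise $N^{-\gamma}$ gain for $\Upsilon^{N,\pi}_{t}[\widehat{\mathbf{Y}}]$ requires checking that all hypotheses of \cite[Theorem 2.3]{du2023empirical} are satisfied by the Markovian SDE \eqref{CBOh}, in particular uniform-in-time moment bounds for the $\widehat{Y}^{i}$ (from \eqref{lemeq} of Lemma \ref{lemsta}) and the strict dissipativity $\mathfrak{a}>2\mathfrak{b}$ granted by Lemma \ref{lemsta}. All the remaining steps are direct multi-particle analogues of computations already carried out in the proof of Theorem \ref{mainthm}.
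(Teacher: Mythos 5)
Your proposal is correct and follows essentially the same route as the paper: the paper's proof simply repeats the argument of Theorem \ref{mainthm} with Proposition \ref{prop} replaced by its multi-particle analogue from \cite{du2023empirical} (cited there as Lemma 5.1, which underlies the Theorem 2.3 you invoke), exactly the coupling-plus-empirical-measure decomposition you describe. The step you flag as the "main obstacle" — the $N^{-\gamma}$ gain for $\Upsilon^{N,\pi}_{t}[\widehat{\mathbf{Y}}]$ — is precisely what the paper outsources to that reference, so your outline matches the intended proof in both structure and level of detail.
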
 

\begin{proof}
It is in the same line as the proof of Theorem \ref{mainthm}, where now we use \cite[Lemma 5.1]{du2023empirical} which is the multi-particle analogue of Proposition \ref{prop}. 
\end{proof}

\section*{Acknowledgments}

The authors wish to thank the two referees for their valuable comments which helped improve the paper.

\bibliographystyle{amsxport}
\bibliography{bibliography}

\appendix

\section{Supplementary material}\label{appendix}

\subsection{Proof of Lemma \ref{lemsta}}\label{app: lem}

We need to prove that $\exists\,\mathfrak{a}>2\mathfrak{b}\geq 0$ such that for all $x,y\in\RR^d$ and $(\mu,\nu)\in \mathscr{P}_{p,R}(\RR^d)\times \mathscr{P}_{p}(\RR^d)$ it holds that 
\begin{align}
    &  2 \la b(x,\mu)-	b(y,\nu),x-y\ra +\|\bm\sigma(x,\mu)-\bm\sigma(y,\nu)\|^2 
    \leq -\mathfrak{a}|x-y|^2+ \mathfrak{b} W_p^2(\mu,\nu)\notag
\end{align} 
Let us recall the dynamics
\begin{equation}
\rd X_t=-\lambda(X_t-\kappa \,\mathfrak{m}_{\alpha}(\mathcal{L}_{t}[X]))\dt+\sigma\left(\frac{1}{\alpha}\mathds{I}_{d} + D(X_t-\kappa\,\mathfrak{m}_{\alpha}(\mathcal{L}_{t}[X]))\right) \rd B_t\,
\end{equation}

We have
\begin{equation*}
\begin{aligned}
    & \langle b(x,\mu) - b(y,\nu),\,x-y \rangle = -\lambda |x-y|^{2} + \lambda \kappa \langle \mathfrak{m}_{\alpha}(\mu) - \mathfrak{m}_{\alpha}(\nu), \, x-y \rangle\\
    & \quad\quad \leq -\lambda |x-y|^{2} + \frac{\lambda \kappa}{2}|x-y|^{2} + \frac{\lambda\kappa}{2}|\mathfrak{m}_{\alpha}(\mu) - \mathfrak{m}_{\alpha}(\nu)|^{2}\\
    & \quad \quad \leq -\lambda\left(1-\frac{\kappa}{2}\right)|x-y|^{2} + \frac{\lambda\kappa}{2}L_{\mathfrak{m}}^{2}W_{p}^{2}(\mu,\nu)
\end{aligned}
\end{equation*}
where in the second inequality we have used Lemma \ref{lem: useful estimates}. 
On the other hand, we have
\begin{equation*}
\begin{aligned}
    & \|\bm\sigma(x,\mu)-\bm\sigma(y,\nu)\|^2 = \sigma^{2}\left|(x - \kappa\, \mathfrak{m}_{\alpha}(\mu)) - (y - \kappa\, \mathfrak{m}_{\alpha}(\nu)) \right|^{2}\\
    & \quad \quad \leq 2\sigma^{2}|x-y|^{2} + 2\sigma^{2}\kappa^{2}|\mathfrak{m}_{\alpha}(\mu) - \mathfrak{m}_{\alpha}(\nu)|^{2}\\
    & \quad \quad \leq 2\sigma^{2}|x-y|^{2} + 2\sigma^{2}\kappa^{2} L_{\mathfrak{m}}^{2}W_{p}^{2}(\mu,\nu).
\end{aligned}
\end{equation*}
Therefore, we have
\begin{equation*}
\begin{aligned}
    & 2 \la b(x,\mu)-	b(y,\nu),x-y\ra +\|\bm\sigma(x,\mu)-\bm\sigma(y,\nu)\|^2\\
    &\quad \quad  \leq -\lambda\left(2-\kappa\right)|x-y|^{2} + \lambda\kappa L_{\mathfrak{m}}^{2}W_{p}^{2}(\mu,\nu)\\
    &\quad \quad \quad \quad \quad + 2\sigma^{2}|x-y|^{2} + 2\sigma^{2}\kappa^{2} L_{\mathfrak{m}}^{2}W_{p}^{2}(\mu,\nu)\\
    &\quad \quad = -(2\lambda-\lambda \kappa - 2\sigma^{2})|x-y|^{2} + (\lambda + 2\sigma^{2}\kappa)\kappa L_{\mathfrak{m}}^{2}\,W_{p}^{2}(\mu,\nu)
\end{aligned}
\end{equation*}
This suggests that $\mathfrak{a} = 2\lambda-\lambda \kappa - 2\sigma^{2}$ and $\mathfrak{b} = (\lambda + 2\sigma^{2}\kappa)\kappa L_{\mathfrak{m}}^{2}$. We need to check that $\mathfrak{a} >2 \mathfrak{b} \geq 0$. We always have $(\lambda + 2\sigma^{2}\kappa)\kappa L_{\mathfrak{m}}^{2}\geq 0$. So we check that  $2\lambda-\lambda \kappa - 2\sigma^{2} > (\lambda + 2\sigma^{2}\kappa)\kappa L_{\mathfrak{m}}^{2}$. This can be satisfied provided we have $\lambda > \sigma^{2}$ and we choose $0<\kappa \ll 1$.

Next, we need to prove that there exist $\mathfrak{c}>0$, $\delta>0$ and $K\geq 0$ such that for all $x\in\RR^d, \nu\in \mathscr{P}_2(\RR^d)$ it holds
\begin{align}
    2\la b(x,\nu),x\ra +(1+\delta)\|\bm\sigma(x,\nu)\|^2
    \leq -\mathfrak{c}|x|^2+ K[1 + \delta + \nu(|\cdot|^{2})] \,.\notag
\end{align}

We have
\begin{equation*}
\begin{aligned}
    \langle b(x,\nu),x\rangle & = -\lambda \langle x - \kappa \,\mathfrak{m}_{\alpha}(\nu),\, x \rangle = -\lambda |x|^{2} + \lambda \kappa \,\langle  \mathfrak{m}_{\alpha}(\nu),\,x\rangle\\
    & \leq -\left(\lambda - \frac{\lambda \kappa}{2}\right) |x|^{2} + \frac{\lambda \kappa}{2} |\mathfrak{m}_{\alpha}(\nu)|^{2}\\
    & \leq  -\left(\lambda - \frac{\lambda \kappa}{2}\right) |x|^{2} + \frac{\lambda \kappa}{2}C_{1}^{2} \, \nu(|\cdot|^{2})
\end{aligned}
\end{equation*}
where in the second inequality we have used Lemma \ref{lem: useful estimates}. 
On the other hand, we have 
\begin{equation*}
\begin{aligned}
    \|\bm\sigma(x,\nu)\|^{2} & = \sigma^{2}\sum_{k=1}^{d}\left(\frac{1}{\alpha}+(x - \kappa\, \mathfrak{m}_{\alpha}(\nu))_k \right)^{2} \\
    & \leq 2\sigma^{2}\left( \frac{d}{\alpha^{2}} + | x - \kappa \,\mathfrak{m}_{\alpha}(\nu)|^{2} \right)\\
    & = 2\sigma^{2}\big( |x|^{2} - 2\kappa\, \langle \mathfrak{m}_{\alpha}(\nu),\,x \rangle + \kappa^{2}|\mathfrak{m}_{\alpha}(\nu)|^{2} \big) + \frac{2\sigma^{2}d}{\alpha^{2}}\\
    & \leq 2\sigma^{2}\big( |x|^{2} + \kappa |x|^{2} + \kappa |\mathfrak{m}_{\alpha}(\nu)|^{2}   + \kappa^{2}|\mathfrak{m}_{\alpha}(\nu)|^{2} \big) + \frac{2\sigma^{2} d }{\alpha^{2}}\\
    & = 2\sigma^{2}(1+\kappa)|x|^{2} + 2\sigma^{2}(1+\kappa)\kappa |\mathfrak{m}_{\alpha}(\nu)|^{2} + \frac{2\sigma^{2} d }{\alpha^{2}}\\
    & \leq 2\sigma^{2}(1+\kappa)|x|^{2} + 2\sigma^{2}(1+\kappa)\kappa C_{1}^{2}\, \nu(|\cdot|^{2}) + \frac{2\sigma^{2} d }{\alpha^{2}}.
\end{aligned}
\end{equation*}
Therefore we have
\begin{equation*}
\begin{aligned}
    & 2 \langle b(x,\nu),x\rangle + (1+\delta )\|\bm\sigma(x,\nu)\|^{2} \\
    & \quad \quad \leq -(2\lambda - \lambda \kappa ) |x|^{2} + \lambda \kappa \,C_{1}^{2} \, \nu(|\cdot|^{2})  + (1+\delta )\frac{2\sigma^{2} d }{\alpha^{2}}\\
    & \quad \quad \quad \quad + 2\sigma^{2}(1+\delta )(1+\kappa)|x|^{2} + 2\sigma^{2}(1+\delta )(1+\kappa)\kappa C_{1}^{2}\, \nu(|\cdot|^{2})\\
    & \quad \quad = - \big[(2\lambda - \lambda \kappa ) - 2\sigma^{2}(1+\delta)(1+\kappa)\big]\, |x|^{2} \\
    & \quad \quad \quad \quad + \big[\lambda \kappa \,C_{1}^{2} + 2\sigma^{2}(1+\delta)(1+\kappa)\kappa C_{1}^{2}\big]\, \nu(|\cdot|^{2})+ (1+\delta )\frac{2\sigma^{2} d }{\alpha^{2}}.
\end{aligned}
\end{equation*}
This suggests that we can take 
\begin{align*}
     \mathfrak{c} = (2\lambda - \lambda \kappa ) - 2\sigma^{2}(1+\delta)(1+\kappa) 
    \text{ and }  K= \max\left\{ 2\sigma^{2}d/\alpha^{2} ,\; \big[\lambda \kappa \,C_{1}^{2} + 2\sigma^{2}(1+\delta)(1+\kappa)\kappa C_{1}^{2}\big] \right\}.
\end{align*} 
It remains to check if $\mathfrak{c}>0$. A sufficient condition for the latter to hold is to have $\lambda  >  4\sigma^{2}(1+\delta)$, which can be satisfied for example by choosing $\delta = 1$ and $\lambda > 8 \sigma^{2}$. Note that this condition complies with the condition ($\lambda > \sigma^{2}$) found earlier. Therefore we have $K\geq 0$ and $\delta>0$ such that
\begin{equation*}
\begin{aligned}
    & 2 \langle b(x,\nu),x\rangle + (1+\delta )\|\bm\sigma(x,\nu)\|^{2} \leq -\mathfrak{c} |x|^{2} + K[1+\delta + \nu(|\cdot|^{2})], \quad \text{ and } \mathfrak{c}>0.
\end{aligned}
\end{equation*}
Simplified sufficient conditions can be obtained as follows:
\begin{equation*}
\begin{aligned}
    \mathfrak{c}>0 \Leftrightarrow\;& (2\lambda - \lambda \kappa ) - 2\sigma^{2}(1+\delta)(1+\kappa) >0\\
    \Leftrightarrow \; & (2\lambda - \lambda \kappa ) > 2\sigma^{2}(1+\delta)(1+\kappa)\\
    \Leftrightarrow \; & \lambda (2-\kappa) >   2\sigma^{2}(1+\delta)(1+\kappa)\\
    k\in (0,1) \Rightarrow & \lambda (2-\kappa) > \lambda  \\
    \text{it is sufficient to have } \; & \lambda  >  4\sigma^{2}(1+\delta)  >  2\sigma^{2}(1+\delta)(1+\kappa)\\
    \text{it is sufficient to have } \; & \lambda  > 8\sigma^{2} \text{ when } \delta =1. 
\end{aligned}
\end{equation*}

\subsection{Proof of Proposition 3.4}\label{app: prop}

Let us check that an invariant measure for (2.2) 
exists and is unique. Moreover, it is in $\mathscr{P}_{p,R}(\mathbb{R}^{d})$ for some sufficiently large fixed $R>0$. To do so, we shall apply the result of S.-Q. Zhang \cite[Theorem 2.2]{zhang2023existence}.

\textbullet\; Assumption \cite[(H1)]{zhang2023existence}\\
Let us choose $r_{1} = r_{2} = 1$ and $r_{3} = 1+r_{2}= 2$. Then we have, for all $\nu \in \mathscr{P}_{2}(\mathbb{R}^{d})$,
\begin{equation*}
\begin{aligned}
    & 2 \langle b(x,\nu),\nu\rangle + \|\bm\sigma(x,\nu)\|^{2} \\
    & \quad \quad \leq -(2\lambda - \lambda \kappa ) |x|^{2} + \lambda \kappa \,C_{1}^{2} \, \nu(|\cdot|^{2}) \\
    & \quad \quad \quad \quad +  2\sigma^{2}(1+\kappa)|x|^{2} + 2\sigma^{2}(1+\kappa)\kappa C_{1}^{2}\, \nu(|\cdot|^{2}) + \frac{2\sigma^{2} d }{\alpha^{2}}\\
    & \quad \quad = - \big[(2\lambda - \lambda \kappa ) - 2\sigma^{2}(1+\kappa)\big]\, |x|^{2} + \frac{2\sigma^{2} d }{\alpha^{2}}\\
    & \quad \quad \quad \quad + \big[\lambda \kappa \,C_{1}^{2} + 2\sigma^{2}(1+\kappa)\kappa C_{1}^{2}\big]\, \nu(|\cdot|^{2})
\end{aligned}
\end{equation*}
and it can be verified that $\tilde{C}_{1} := (2\lambda - \lambda \kappa ) - \sigma^{2}(1+\kappa^{2})$ is a positive constant as long as $\lambda$ is chosen large enough comparing to $\sigma^{2}$, and for $0<\kappa \ll 1$. We also need to check that $\tilde{C}_{1} > \tilde{C}_{3}$ where $\tilde{C}_{3} := \lambda \kappa \,C_{1}^{2} + \sigma^{2}(1+\kappa)\kappa C_{1}^{2}$. This can also be verified since $\kappa$ can be chosen small enough. Therefore, denoting by $\tilde{C}_{2}=2\sigma^{2}d/\alpha^{2}$, we have
\begin{equation*}
\begin{aligned}
    & 2 \langle b(x,\nu),\nu\rangle + \|\bm\sigma(x,\nu)\|^{2}  \leq - \tilde{C}_{1}|x|^{2} + \tilde{C}_{2} + \tilde{C}_{3} \nu(|\cdot|^{2}), \quad \text{ and } \tilde{C}_{1}>\tilde{C}_{3}
\end{aligned}
\end{equation*}

\textbullet\; Assumption \cite[(H2.i)]{zhang2023existence}\\
For $\nu$ fixed in $\mathscr{P}_{2}(\mathbb{R}^{d})$, we need to check that the drift and diffusion terms are locally Lipschitz, that is: \\
For every $n\in \mathbb{N}$ and $\nu \in \mathscr{P}_{2}(\mathbb{R}^{d})$, there exists $K_{n}>0$ such that for all $|x|\vee |y|\leq n$ we have
\begin{equation*}
    |b(x,\nu) - b(y,\nu)| + \|\bm\sigma(x,\nu) - \bm\sigma(y,\nu)\| \leq K_{n}|x-y|.
\end{equation*}
This is easily verified from the definition of $b$ and $\bm\sigma$ (in fact, $K_{n}=\lambda + \sigma$). 

\textbullet\; Assumption \cite[(H2.ii)]{zhang2023existence}\\
We need to check that the drift has a polynomial growth. More precisely we want to check that: there exists a locally bounded function $\mathfrak{h}:[0,+\infty)\to [0,+\infty)$ such that
\begin{equation*}
    |b(x,\nu)| \leq \mathfrak{h}(\nu(|\cdot|^{2}))\, (1+|x|),\quad x\in \mathbb{R}^{d},\; \nu \in \mathscr{P}_{2}(\mathbb{R}^{d}).
\end{equation*}
This holds true, noting that
\begin{equation*}
\begin{aligned}
    |b(x,\nu)| & = \lambda|x- \kappa \,\mathfrak{m}_{\alpha}(\nu)|\leq \lambda\big(|x| + \kappa |\mathfrak{m}_{\alpha}(\mu)|\big)\\
    & \leq \lambda|x| + \lambda\kappa C_{1}\nu(|\cdot|^{2})^{\frac{1}{2}},\quad \text{using Lemma 3.1}\\
    & \leq \lambda(1+|x|) + \lambda\kappa C_{1}(1+|x|)\nu(|\cdot|^{2})^{\frac{1}{2}}\\
    & \leq \mathfrak{h}(\nu(|\cdot|^{2}))\, \big(1 + |x| \big)
\end{aligned}
\end{equation*}
where we have set $\mathfrak{h}(\xi) = \lambda + \lambda\kappa C_{1}\,\xi^{\frac{1}{2}}$ for every $\xi\in [0,+\infty)$.

\textbullet\; Assumption \cite[(H3)]{zhang2023existence}\\
We need the drift and diffusion coefficients to be continued on $\mathscr{P}_{2,R}(\mathbb{R}^{d})$ equipped with the Wasserstein metric. This is guaranteed thanks to Lemma \ref{lem: useful estimates}.

\textbullet\; Assumption \cite[(H4)]{zhang2023existence}\\
We need the diffusion matrix to be non-degenerate on $\mathbb{R}^{d}\times \mathscr{P}_{2}(\mathbb{R}^{d})$ , that is
\begin{equation*}
    \bm\sigma(x,\nu)\bm\sigma^{*}(x,\nu)>0,\quad x\in \mathbb{R}^{d},\; \nu \in \mathscr{P}_{2}(\mathbb{R}^{d}).
\end{equation*}
This is guaranteed thanks to the additional term $\frac{1}{\alpha}\mathds{I}_{d}$ in the definition of $\bm{\sigma}$, noting that $D(\cdot)$ is a non-negative matrix.

\textbullet\; Conclusion:\\
The latter assumptions being satisfied, we can then apply  \cite[Theorem 2.2]{zhang2023existence}, which guarantees that the distribution-dependent SDE (DDSDE)
\begin{equation}
\rd X_t=-\lambda(X_t-\kappa\, \mathfrak{m}_{\alpha}(\mathcal{L}_{t}[X]))\dt+\sigma\left(\frac{1}{\alpha}\mathds{I}_{d} + D(X_t-\kappa\,\mathfrak{m}_{\alpha}(\mathcal{L}_{t}[X]))\right) \rd B_t\,
\end{equation}
has a stationary distribution. In fact, from the proof in \cite{zhang2023existence} (see the last line in page 8, and  the lines between equation (2.22) and equation (2.23) in page 10), it appears that there exists $R_{0}>0$ depending only on the constants of the problem ($\lambda, \kappa, \sigma, \alpha$, and $C_{1}$ from Lemma \ref{lem: useful estimates}) 
such that the stationary distribution exists in $\mathscr{P}_{2,R}(\mathbb{R}^{d})$, and $R\geq R_{0}$ is determined by the initial distribution.

\subsection{Additional computations}\label{app: comp}

In order to better see how  $\eta_\alpha^*(\dx):=\frac{\omega_\alpha^f(x)\mu_\alpha^*(\dx)}{\int_{ \RR^d }\omega_\alpha^f(x)\mu_\alpha^*(\dx)}$ approximates the Dirac distribution $\delta_{x^*}$ for large $\alpha\gg 1$, we proceed with the following computations. Recall Laplace's principle
\begin{equation*}
	\lim\limits_{\alpha\to\infty}\left(-\frac{1}{\alpha}\log\left(\int_{ \RR^d }\omega_\alpha^f(x)\mu_\alpha^*(\dx)\right)\right)= f(x^*)\,.
\end{equation*}
The latter is equivalent to 
\begin{equation*}
    \lim\limits_{\alpha\to\infty}\left(\int_{ \RR^d }\omega_\alpha^f(x)\mu_\alpha^*(\dx)\right)^{\frac{1}{\alpha}}= e^{-f(x^*)}\,,
\end{equation*}
which also means
\begin{equation*}
\begin{aligned}
    \lim\limits_{\alpha\to\infty} \frac{e^{-f(x^*)}}{\left(\int_{ \RR^d }\omega_\alpha^f(x)\mu_\alpha^*(\dx)\right)^{\frac{1}{\alpha}}} = 1,
\end{aligned}
\end{equation*}
or equivalently (by rising to power $\alpha$)
\begin{equation*}
\begin{aligned}
    \lim\limits_{\alpha\to\infty} \frac{e^{-\alpha f(x^*)}}{\int_{ \RR^d }\omega_\alpha^f(x)\mu_\alpha^*(\dx)} = 1.
\end{aligned}
\end{equation*}
It suffices now to note that the left-hand side is the integration of the indicator function $\textbf{I}_{\{x^*\}}(\cdot)$ supported on the global minimizer $x^{*}$ with respect to $\eta_\alpha^*(\dx)$. Indeed, 
we have
\begin{equation*}
    \frac{e^{-\alpha f(x^*)}}{\int_{ \RR^d }\omega_\alpha^f(x)\mu_\alpha^*(\dx)}  = \la \eta_\alpha^*(\dx),\textbf{I}_{\{x^*\}}\ra
\end{equation*}
where $\langle\cdot\,,\,\cdot\rangle$ denotes the integration of a function with respect to a measure. Therefore, when $\alpha$ is large enough, one expects $\la \eta_\alpha^*(\dx),\textbf{I}_{\{x^*\}}\ra \approx 1$. 
Thus $\eta_\alpha^*$ approximates the Dirac distribution $\delta_{x^*}$ for large $\alpha\gg 1$. 

\subsection{Example of weight measures}\label{app: ex}

For the sake of self-containedness of the paper,  we borrow from \cite[Example 2.1]{du2023empirical} an example of a weight measure that arises in applications. 
In the dynamics \eqref{CBOself pi}, one can choose the weight measure $\hat{\pi}$ as defined in \cite{du2023empirical}, that is
\begin{equation}\label{pi hat example}
    \hat{\pi}_{t}(\cdot) = \frac{1}{n_{t}} \sum\limits_{k=0}^{n_{t}-1} \delta_{\frac{k\tau - \theta}{t}\vee 0}(\cdot)\quad \text{ with } n_{t} = \left\lceil \frac{t}{\tau} \right\rceil
\end{equation}
where $\tau >0$ is the \underline{sampling period} and $\theta\geq 0$ is the \underline{delay}. 
Let us denote by $\hat{Y}$ the resulting process that is
\begin{equation*}
	\rd \hat{Y}_t = - \lambda ( \hat{Y}_t -\kappa \,\mathfrak{m}_\alpha(\mE_{t}^{\hat{\pi}}[\hat{Y}])) \dt + \sigma \left(\frac{1}{\alpha}\mathds{I}_{d} + D(\hat{Y}_t - \kappa\, \mathfrak{m}_\alpha(\mE_{t}^{\hat{\pi}}[\hat{Y}])) \right) \rd B_t\,, \;\hat{Y}_{0}=x\in \mathbb{R}^{d}.
\end{equation*}
The latter measure $\hat{\pi} = (\hat{\pi}_{t})_{t\geq 0}$ belongs to $\Pi_{1}(1-\mathfrak{b}/\mathfrak{a})$ and $\Pi_{2}(\varepsilon)$ for any $\varepsilon<1$.

\hfill

A \underline{\textbf{practical example}} could be:\; $\tau = 1/T$\; where $T$ is the time horizon, and \; $\theta = 2\,\tau$. 

\hfill\\
Then, at each given time $t\in (0,T)$, we shall introduce $m\in \mathbb{N}$ (depending on $t$) such that
\begin{equation*}
    m\tau \leq t < (m+1)\tau \quad \Leftrightarrow \quad \frac{m}{T} \leq t < \frac{m+1}{T}.
\end{equation*}
In the notation of \eqref{pi hat example}, we have $\quad m= \left\lfloor \frac{t}{\tau} \right\rfloor \quad \Leftrightarrow \quad m \leq \frac{t}{\tau} < m+1 \quad \Leftrightarrow \quad m=n_{t}-1.$ \\
In this example, the measure \eqref{pi hat example} becomes
\begin{equation}\label{pi hat example 2}
\begin{aligned}
    \hat{\pi}_{t}(\cdot)
    & = \frac{1}{n_{t}} \sum\limits_{k=0}^{n_{t}-1} \delta_{\frac{k\tau - \theta}{t}\vee 0}(\cdot)\quad (\text{with } n_{t} = \left\lceil \frac{t}{\tau} \right\rceil = m+1, \quad \text{ and } \quad \theta=2\,\tau)\\
    & = \frac{1}{m+1} \sum\limits_{k=0}^{m} \delta_{\frac{(k-2)\tau}{t}\vee 0}(\cdot) 
    \; \; = \begin{cases}
        \; \delta_0,\quad m=0,1\\
        \; \frac{1}{m+1} \left(2\,\delta_{0}(\cdot) + \sum\limits_{k=2}^{m} \delta_{\frac{(k-2)\tau}{t}}(\cdot)\right),\quad m\geq 2
    \end{cases}\\
    & = \begin{cases}
       \; \delta_0,\quad m=0,1\\
        \; \frac{1}{m+1} \left(2\,\delta_{0}(\cdot) +\sum\limits_{k=0}^{m-2} \delta_{\textcolor{red}{k\frac{\tau}{t}}}(\cdot)\right),\quad m\geq 2
    \end{cases}
\end{aligned}
\end{equation}
Hence, one obtains when $m\geq 2$
\begin{equation*}
\begin{aligned}
    \mE_t^{\hat{\pi}}[Y] 
    & := \int_0^1 \delta_{Y_{\textcolor{red}{s}t}}\,\hat{\pi}_t(\text{d}\textcolor{red}{s}) = \frac{1}{m+1}\left(2\delta_{Y_{0}} +  \sum\limits_{k=0}^{m-2}\delta_{Y_{\textcolor{red}{k\frac{\tau}{t}}t}}\right) \quad (\text{here } s \leftarrow \textcolor{red}{k\frac{\tau}{t}})\\
    & = \frac{1}{m+1}\left(2\delta_{Y_{0}} + \sum\limits_{k=0}^{m-2}\delta_{Y_{\frac{k}{T}}}\right) \quad (\text{here } \tau = \frac{1}{T}).
\end{aligned}
\end{equation*}
Observe that $\mE_t^{\hat{\pi}}[Y]$  only depends on the observation of $Y_{\cdot}$ at the time steps $\frac{0}{T}, \frac{1}{T}, \frac{2}{T}, \dots, \frac{m-2}{T}.$ The last two time steps $\frac{m-1}{T}$ and $\frac{m}{T}$ are not included due to the chosen delay $\theta=2\,\tau$.  Moreover, we have 
\begin{equation*}
\begin{aligned}
    \mE_t^{\hat{\pi}}[Y]=\frac{1}{m+1}\left(2\delta_{Y_{0}} + \sum\limits_{k=0}^{m-2}\delta_{Y_{\frac{k}{T}}}\right) 
    & = \frac{1}{m+1}\left(2\delta_{Y_{0}} + \sum\limits_{k=0}^{m-2}\delta_{Y_{k\tau}}\right)\\
    & = \frac{1}{m+1}\left(2\delta_{Y_{0}} + \sum\limits_{k=0}^{m-2}\delta_{Y_{\frac{k}{m}\textcolor{red}{m\tau}}}\right)\\
    & = \int_0^1 \delta_{Y_{s\textcolor{red}{m\tau}}}\,\hat{\pi}_{\textcolor{red}{m\tau}}(\text{d}s)=\mathcal{E}^{\hat{\pi}}_{\textcolor{red}{m\tau}} [Y]
\end{aligned}
\end{equation*}
where $\hat{\pi}_{m\tau}(\cdot)$ is obtained using \eqref{pi hat example 2} with $\textcolor{red}{m\tau}$ instead of $t$ therein, that is
\begin{equation*}
\begin{aligned}
    \hat{\pi}_{\textcolor{red}{m\tau}}(\cdot)
    & = \frac{1}{m+1} \left(2\,\delta_{0}(\cdot) +\sum\limits_{k=0}^{m-2} \delta_{\frac{k}{m}}(\cdot)\right).
\end{aligned}
\end{equation*}
Finally in this example, the consensus point becomes
\begin{equation*}
    \mathfrak{m}_\alpha(\mE_t^{\hat{\pi}}[\hat{Y}]) = \frac{2\,\hat{Y}_{0}\exp(-\alpha f(\hat{Y}_{0})) + \sum\limits_{k=0}^{m-2} \hat{Y}_{\frac{k}{T}}\exp(-\alpha f(\hat{Y}_{\frac{k}{T}}))}{
    2\, \exp(-\alpha f(\hat{Y}_{0})) + \sum\limits_{\ell=0}^{m-2} \exp(-\alpha f(\hat{Y}_{\frac{\ell}{T}}))}.
\end{equation*}
When $m=1$, i.e. $\frac{1}{T} \leq t < \frac{2}{T}$,  we have $\mE_t^{\hat{\pi}}[\hat{Y}]= \delta_{\hat{Y}_{0}}$, and the consensus point is
\begin{equation*}
    \mathfrak{m}_\alpha(\mE_t^{\hat{\pi}}[\hat{Y}]) = \frac{\hat{Y}_{0}\exp(-\alpha f(\hat{Y}_{0}))}{\exp(-\alpha f(\hat{Y}_{0}))} = \hat{Y}_{0}.
\end{equation*}
Similarly, when $m=0$, i.e. $0 \leq t < \frac{1}{T}$, we also have $\mE_t^{\hat{\pi}}[\hat{Y}] = \delta_{\hat{Y}_{0}}$ and $\mathfrak{m}_\alpha(\mE_t^{\hat{\pi}}[\hat{Y}]) = \hat{Y}_{0}$.

\hfill

\hfill

\hfill

\end{document}